\newtheorem{theorem}{Theorem}[section]
\newtheorem{prop}[theorem]{Proposition}
\newtheorem{lemma}[theorem]{Lemma}
\newtheorem{conj}[theorem]{Conjecture}
{
\theoremstyle{remark}
\newtheorem*{remark}{Remark}
\newtheorem*{remarks}{Remarks}
}
{
\theoremstyle{definition}
\newtheorem{definition}[theorem]{Definition}
}
\renewcommand{\mod}[1]{{\ifmmode\text{\rm\ (mod~$#1$)}\else\discretionary{}{}{\hbox{ }}\rm(mod~$#1$)\fi}}
\newcommand{\ep}{\varepsilon}
\newcommand{\leg}[2]{\genfrac{(}{)}{}{}{#1}{#2}}
\newcommand{\C}{{\mathcal C}}
\newcommand{\E}{{\mathcal E}}
\newcommand{\A}{{\mathcal A}}
\newcommand{\B}{{\mathcal B}}
\newcommand{\F}{{\mathbb F}}
\newcommand{\Q}{{\mathbb Q}}
\newcommand{\R}{{\mathbb R}}
\newcommand{\Z}{{\mathbb Z}}
\DeclareMathAlphabet{\curly}{U}{rsfs}{m}{n}
\def\Ss{\curly{S}}
\begin{document}

\title[Averages of the number of points on elliptic curves]{Averages of the number of points on elliptic curves}

\author[Greg Martin]{Greg Martin}
\address{Department of Mathematics \\ University of British Columbia \\ Room 121, 1984 Mathematics Road \\ Vancouver, BC\\V6T 1Z2\\Canada}
\email{gerg@math.ubc.ca}
\author[Paul Pollack]{Paul Pollack}
\address{Department of Mathematics \\ University of Georgia\\Boyd Graduate Studies Research Center\\ Athens, GA 30602\\USA}
\email{pollack@uga.edu}
\author[Ethan Smith]{Ethan Smith}
\address{Department of Mathematics\\Liberty University\\1971 University Blvd\\MSC Box 710052\\ Lynchburg, VA 24502\\USA}
\keywords{elliptic curves, Koblitz conjecture, mean values of arithmetic functions}
\email{ecsmith13@liberty.edu}

\subjclass[2010]{Primary 11G05, Secondary 11N37, 11N60}
\begin{abstract} If $E$ is an elliptic curve defined over $\Q$ and $p$ is a prime of good reduction for $E$, let $E(\F_p)$ denote the set of points on the reduced curve modulo $p$. Define an arithmetic function $M_E(N)$ by setting $M_E(N):= \#\{p\colon \#E(\F_p)= N\}$. Recently, David and the third author studied the average  of $M_E(N)$ over certain ``boxes'' of elliptic curves $E$. Assuming a plausible conjecture about primes in short intervals, they showed the following: for each $N$, the average of $M_E(N)$ over a box with sufficiently large sides is $\sim \frac{K^{\ast}(N)}{\log{N}}$ for an explicitly-given function  $K^{\ast}(N)$.
	
The function $K^{\ast}(N)$ is somewhat peculiar: defined as a product over the primes dividing $N$, it resembles a multiplicative function at first glance. But further inspection reveals that it is not, and so one cannot directly investigate its properties by the usual tools of multiplicative number theory. In this paper, we overcome these difficulties and prove a number of statistical results about $K^{\ast}(N)$. For example, we determine the mean value of $K^{\ast}(N)$ over all $N$, odd $N$ and prime $N$, and we show that $K^{\ast}(N)$ has a distribution function. We also explain how our results relate to existing theorems and conjectures on the multiplicative properties of $\#E(\F_p)$, such as Koblitz's conjecture.
\end{abstract}
\maketitle

\section{Introduction}

Let $E$ be an elliptic curve defined over the field $\Q$ of rational numbers.  For the sake of concreteness, we assume
that the affine points of $E$ are given by a Weierstrass equation of the form
\begin{equation}\label{w eqn}
E: Y^2=X^3+aX+b,
\end{equation}
where $a$ and $b$ are integers satisfying the condition $-16(4a^3+27b^2)\ne 0$.  For any prime $p$ where $E$ has
good reduction, we let $E(\F_p)$ denote the group of $\F_p$-points on the reduced curve.
In~\cite{Kow}, Kowalski introduced the arithmetic function $M_E(N)$, defined by
\begin{equation*}
M_E(N)=\#\{ p \text{ prime}\colon \#E(\F_p) = N \}.
\end{equation*}
The Hasse bound~\cite{Has} implies that if $p$ is counted by $M_E(N)$, then $p$ lies between
$(\sqrt N-1)^2$ and $(\sqrt N+1)^2$.  Thus, $M_E(N)$ is a well-defined (finite) integer.

The problem of obtaining good estimates for $M_E(N)$ appears to be very difficult.
The condition imposed by Hasse's bound together with an upper bound sieve
gives the weak upper bound $M_E(N)\ll \sqrt N/\log(N+1)$ for any $N\ge 1$.
Except in the case that $E$ has complex multiplication, nothing stronger is known.
As we will explain later, the average value of $M_E(N)$ as $N$ varies over various sets of integers  is related to some important theorems and conjectures in number theory.  In~\cite{DS}, David and the third
author established an ``average value theorem'' for $M_E(N)$ as $E$ varies over a family of elliptic curves. That work was inspired by pioneering results of Fouvry and Murty \cite{FV96}, who proved an average value theorem for counts of supersingular primes.
Unfortunately, because of the restriction that all primes counted by $M_E(N)$ lie between $(\sqrt N-1)^2$ and
$(\sqrt N+1)^2$, the result of \cite{DS} is necessarily conditional upon a conjecture about the distribution of primes in short
intervals (see Conjecture~\ref{BDH} below).

The main result of~\cite{DS} introduced a strange arithmetic function, which was called
$K(N)$ because it is ``almost a constant''. In order to define $K(N)$, we recall the common notation
$\nu_p(n)$ for the exact power of $p$ that divides $n$, so that $n=\prod_p p^{\nu_p(n)}$. We also recall the
Kronecker symbol $\leg ab$, an extension of the Jacobi symbol that is defined for all integers $a$ and $b\ne0$
(see, for instance, \cite[Definition 1.4.8, page 28]{Coh}).

\begin{definition} \label{K definition}
For any positive integer $N$, we define
\[
K(N) = \prod_{p\nmid N} \bigg( 1 - \frac{\leg{N-1}{p}^2p + 1}{(p-1)^2(p+1)} \bigg) \prod_{p\mid N} \bigg( 1 - \frac1{p^{\nu_p(N)}(p-1)} \bigg).
\]
We also define $K^*(N) = K(N) N/\phi(N)$, where $\phi(N)$ is the usual Euler totient function.
\end{definition}

As we will see later, it is actually the function $K^*(N)$ that has an interesting connection
to the function $M_E(N)$. The purpose of the present work is a statistical study of the function $K^*(N)$.
Our computations will illustrate a technique for dealing with arithmetic functions that have a form similar to,
but are not exactly, multiplicative functions.  Our first main result is the computation of the average value of $K^*$, first over all $N$ and then over odd values of $N$.


\begin{theorem} \label{N odd average for K} For $x\geq 2$, we have
\[ \sum_{N \le x} K^*(N) = x + O\bigg( \frac x{\log x} \bigg) \quad\text{and}\quad \sum_{\substack{N\le x \\ N \text{ odd}}} K^*(N) = \frac x3 + O\bigg( \frac x{\log x} \bigg). \]
\end{theorem}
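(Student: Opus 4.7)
The plan hinges on extracting a hidden multiplicative structure from $K^*$. The key identity to establish is
\[
K^*(N) \;=\; C\cdot f(N-1)\cdot h(N) \qquad (N\ge 2),
\]
where $C = \prod_{p\ge 3}(1-1/(p-1)^2)$ is a convergent constant and $f,h$ are honestly multiplicative. To prove the identity, I would use that $\leg{N-1}{p}^2$ is the indicator of $p\nmid N-1$: this splits the product over $p\nmid N$ in $K(N)$ into one piece over $p\mid N-1$ (with factor $1-\tfrac{1}{(p-1)^2(p+1)}$) and one piece over $p\nmid N(N-1)$ (with factor $1-\tfrac{1}{(p-1)^2}$). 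Because exactly one of $N,N-1$ is always even, the prime $2$ never appears in the second piece, so the singular factor $1 - 1/(2-1)^2 = 0$ is avoided and $C$ is a well-defined product over odd primes. A short rearrangement, absorbing the $p\mid N$ product and the factor $N/\phi(N)$, yields the stated identity with $f(2^k)=2/3$, $f(p^k) = 1+\tfrac{1}{(p-2)(p+1)}$ for $p\ge 3$, and $h(2^k) = 2(1-2^{-k})$, $h(p^k) = \tfrac{p^k(p-1)-1}{(p-2)p^k}$ for $p\ge 3$.

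With the decomposition in hand, I would compute the mean by double M\"obius inversion: write $f=\mathbf{1}\star F$ and $h=\mathbf{1}\star H$. Then
\[
\sum_{N\le x}f(N-1)h(N) \;=\; \sum_{\substack{d,e\\ \gcd(d,e)=1}} F(d)H(e)\#\{N\le x : d\mid N-1,\, e\mid N\},
\]
where the coprimality of $d,e$ is automatic from $\gcd(N,N-1)=1$, and CRT gives $x/(de)+O(1)$ for the inner count. The main term is therefore $x\cdot\prod_p\bigl(1+\tfrac{F(p)}{p}+\sum_{b\ge 1}\tfrac{H(p^b)}{p^b}\bigr)$. A direct computation shows $\sum_{b\ge 1}H(p^b)/p^b = \tfrac{1}{(p-2)(p+1)}$ for $p\ge 3$ and $1/6$ at $p=2$; combining with $F(p)/p$ and simplifying, the local factor at $p=2$ collapses to exactly $1$, while each odd-prime factor simplifies to $(p-1)^2/(p(p-2))$, whose product over $p\ge 3$ is $1/C$. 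Multiplying by the leading $C$ recovers the asserted main term $x$.

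The odd-$N$ sum proceeds analogously, with three bookkeeping modifications: (i)~split the M\"obius variable $d$ into odd $d$ and $d=2d'$ with $d'$ odd, using $F(2)=-1/3$; (ii)~apply CRT to translate ``$N$ odd and $d\mid N-1$'' into ``$N\equiv 1\pmod{2d}$'' (resp.\ $\pmod{2d'}$); and (iii)~drop $p=2$ from the inner $e$-summation. Steps (i) and (ii) produce coefficients $1-\tfrac13=\tfrac23$ and $\tfrac12$ respectively, while the Euler product again telescopes to $1/C$, giving the main term $C\cdot\tfrac23\cdot\tfrac12\cdot\tfrac1C\cdot x = x/3$. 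The error analysis is unproblematic: $\sum_d|F(d)|$ converges absolutely, $\sum_{e\le x}|H(e)|\ll\log x$, and a Rankin-type bound on the tail gives power savings, yielding a total error comfortably smaller than the $O(x/\log x)$ allowed by the theorem. The principal obstacle is really the opening step---recognizing $K^*$ as $C\cdot f(N-1)\cdot h(N)$, a product of multiplicative functions at the coprime arguments $N$ and $N-1$; once the structure has been unmasked, the rest is a pleasant exercise in Euler products with satisfying cancellations at each prime.
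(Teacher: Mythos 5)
Your proof is correct, and it takes a genuinely different route from the one in the paper. The paper truncates the Euler products defining $K$ and $N/\phi(N)$ at a threshold $z = \frac{1}{10}\log x$, partitions the integers $N \le x$ into ``configurations'' recording which primes $\ell \le z$ divide $N$, which divide $N-1$, and to what power, and then proves the remarkable identity $\sum_{\sigma} K_z(\sigma)R_z(\sigma)d_{\sigma} = 1$ by factoring that sum as $\prod_{\ell\le z}\big(P_{\A}(\ell)+P_{\B}(\ell)+P_{\C}(\ell)\big)$ with each local factor equal to $1$. You instead work directly with the decomposition $K^*(N) = C_2\, F(N-1)\, h(N)$ where $h = G\cdot (N/\phi(N))$ --- a decomposition the paper does record (equations~\eqref{K decomposition}--\eqref{G def}) but uses only for the prime average in Section~\ref{prime section}, not for Theorem~\ref{N odd average for K} --- and you open up $F$ and $h$ as Dirichlet convolutions $\mathbf{1}\star F$, $\mathbf{1}\star H$, evaluating the resulting bilinear sum by CRT and an Euler product. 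I verified your local data: $f(p^k) = \frac{p^2-p-1}{(p-2)(p+1)}$ and $h(p^k) = \frac{p^k(p-1)-1}{(p-2)p^k}$ for odd $p$, $F(p)/p + \sum_{b\ge1}H(p^b)/p^b = \frac{1}{p(p-2)}$, the local factor $(p-1)^2/(p(p-2))$ telescoping against $C_2$, and the collapse of the $p=2$ factor to $1$ (resp.\ the $\frac23\cdot\frac12$ bookkeeping in the odd case). These collapses are exactly the arithmetic behind the paper's $P_\A+P_\B+P_\C=1$; the two proofs exploit the same coincidence in different bookkeeping. Your convolution route is shorter and actually yields a sharper error term --- the $O(1)$ errors per pair $(d,e)$ contribute only $O(\sum_d|F(d)|\cdot\sum_{e\le x}|H(e)|) \ll \log x$, and the truncation of the infinite bilinear sum to $de\le x$ costs $x^{o(1)}$, far below the stated $O(x/\log x)$ --- whereas the paper's truncation at $z\approx\log x$ is what forces the $O(x/\log x)$ loss. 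What the paper's configuration machinery buys in exchange is flexibility: it is reused nearly verbatim to bound the higher moments $\mu_k$ in Section~\ref{distribution section}, and it extends to the originally published function $K^\circ$, whose local factors depend on the residue of $N/p^{\nu_p(N)}$ mod $p$ --- data that would require an extra averaging step in a direct convolution argument. One small remark: your estimate $\sum_{e\le x}|H(e)|\ll\log x$ is correct but not immediate from the crude bound $|H(e)|\le 2^{\omega(e)}/e$ (which gives only $(\log x)^2$); it follows because $H(e)\asymp e^{-1}\prod_{p\mid e}\frac{p-1}{p-2}$, whose Dirichlet series has only a simple pole at $s=1$.
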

\noindent Thus $K^{\ast}$ has average value $1$ on all $N$, and average value $2/3$ on odd $N$.

Our second main result is the computation of the average value of $K^*$ on primes. We employ the usual notation $\pi(x)=\#\{p\le x \colon p\text{ is prime}\}$.

\begin{theorem} \label{N prime average for K} Fix $A>1$. Then for $x\geq 2$,
\begin{equation} \label{K or Kstar}
 \sum_{p \le x} K^*(p) = \tfrac{2}{3}C_2J \, \pi(x) + O_A\bigg(\frac{x}{(\log{x})^A}\bigg).
\end{equation}
Here the constants $C_2$ and $J$ are defined by
\begin{equation} \label{C2 def}
C_2 = \prod_{p>2} \bigg( 1 - \frac1{(p-1)^2} \bigg),
\end{equation}
and
\begin{equation}\label{J def}
J = \prod_{p > 2} \bigg(1+\frac{1}{(p-2)(p-1)(p+1)}\bigg).
\end{equation}
Furthermore, the asymptotic formula \eqref{K or Kstar} also holds for $\sum_{p \le x} K(p)$.
\end{theorem}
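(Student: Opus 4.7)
The plan is to reduce $\sum_{p\le x} K^{\ast}(p)$ to a sum over primes of an honestly multiplicative function of $p-1$, and then apply Siegel--Walfisz.

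I would first derive an explicit evaluation of $K^*(q)$ for $q$ an odd prime. Plugging $N=q$ into Definition~\ref{K definition}, the single factor over $p\mid N$ combined with the $q/\phi(q)$ built into $K^\ast$ contributes $(q^2-q-1)/(q-1)^2$; the Euler factor at $p=2$ collapses to $2/3$ since $\leg{q-1}{2}=0$; and for odd $p\neq q$ the factor depends only on whether $p\mid q-1$, taking the value $1-1/(p-1)^2$ or $1-1/((p-1)^2(p+1))$ respectively. Collecting the ``$p\nmid q-1$'' factors into the twin-prime constant $C_2$ (with a correction factor $(q-1)^2/(q(q-2))$ for the omitted prime $p=q$) and taking ratios at the primes dividing $q-1$ should produce the clean formula
\[
K^\ast(q) = \frac{2C_2}{3}\cdot\frac{q^2-q-1}{q(q-2)}\cdot f(q-1),
\]
where $f$ is the multiplicative function with $f(p^k)=1+1/((p-2)(p+1))$ for odd $p$, and $f(2^k)=1$.

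Since $(q^2-q-1)/(q(q-2)) = 1+O(1/q)$ and $f$ is uniformly bounded, replacing this factor by $1$ introduces an error of size at most $O(\sum_{p\le x} f(p-1)/p) = O(\log\log x)$, absorbed into the claimed error term. The task reduces to showing
\[
\sum_{p\le x} f(p-1) = J\,\pi(x) + O_A\!\left(x/(\log x)^A\right).
\]
Writing $f = 1 \ast h$ gives a multiplicative $h$ supported on odd squarefree integers with $h(p)=1/((p-2)(p+1))$, so
\[
\sum_{p\le x} f(p-1) = \sum_{d}h(d)\,\pi(x;d,1),\qquad \pi(x;d,1):=\#\{p\le x:\ p\equiv 1\!\!\pmod d\}.
\]
For $d\le(\log x)^B$ with $B=B(A)$ sufficiently large, Siegel--Walfisz gives $\pi(x;d,1)=\pi(x)/\phi(d)+O(x\exp(-c\sqrt{\log x}))$, and the main term comes out as
\[
\pi(x)\sum_{d}\frac{h(d)}{\phi(d)} = \pi(x)\prod_{p>2}\!\left(1+\frac{1}{(p-2)(p-1)(p+1)}\right)=J\,\pi(x),
\]
with the crucial factor $(p-1)$ in the denominator supplied by $\phi(p)$. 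For the tail $d>(\log x)^B$, the rapid decay $|h(d)|\ll_{\varepsilon} d^{-2+\varepsilon}$ (following from $|h(p)|\le 2/p^2$ for $p\ge 5$) combined with the trivial estimate $\pi(x;d,1)\le x/d+1$ bounds the contribution by $O(x/(\log x)^{B'})$ with $B'$ as large as desired.

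The companion statement for $\sum_{p\le x}K(p)$ follows immediately from $K(p)=K^\ast(p)\,(p-1)/p$: the discrepancy $\sum_{p\le x}K^\ast(p)/p\ll\log\log x$ is absorbed into the error. The main obstacle is not analytic but algebraic: carefully executing the computation of $K^\ast(q)$ above, in particular accounting correctly for the missing prime $p=q$ when extracting $C_2$, so that the leading constant emerges as $\tfrac{2}{3}C_2J$ rather than a similar-looking but incorrect product.
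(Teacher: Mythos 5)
Your proposal is correct and follows essentially the same route as the paper's proof: reduce the sum over primes to a sum of a bounded multiplicative function of $p-1$, write that function as a divisor convolution $1 \ast h$, and apply Siegel--Walfisz to the small moduli while bounding the tail trivially. Your explicit computation of $K^*(q)$ (including the $2/3$ from $\leg{q-1}{2}=0$ and the correction factor $(q-1)^2/(q(q-2))$ for the excluded prime $p=q$) is accurate, and your $f$ agrees with the paper's $F$ restricted to even arguments up to the factor $2/3$, which you absorb into the leading constant; the paper instead uses the global decomposition $K(N)=C_2\,F(N-1)\,G(N)$ and then observes $G(p)=1+O(1/p^2)$. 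Both formulations give identical $h$-values at odd primes, $h(p)=1/((p-2)(p+1))$, and the Euler product over $h(d)/\phi(d)$ supplies the constant $J$ in each case, so the two arguments coincide in substance.
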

\begin{remark}
We have written $C_2$ and $J$ as two separate constants because $C_2$ arises naturally by itself in the analysis of the function $K(N)$ (see equation~\eqref{K decomposition}).
\end{remark}

The technique we use to establish Theorems~\ref{N odd average for K} and~\ref{N prime average for K}, which is dictated by the unusual Definition~\ref{K definition} for $K(N)$, is of interest in its own right: the function $K$ looks much like a multiplicative function but actually is not. One can rewrite Definition~\ref{K definition} in the following form:
\begin{equation} \label{K decomposition}
K(N) = C_2 F(N-1) G(N)
\end{equation}
where $C_2$ is the twin primes constant defined in equation~\eqref{C2 def},
\begin{equation}\label{F def}
F(n) = \prod_{\substack{p\mid n \\ p>2}} \bigg( 1 - \frac1{(p-1)^2} \bigg)^{-1} \prod_{p\mid n}\bigg( 1-\frac1{(p-1)^2(p+1)} \bigg),
\end{equation}
and
\begin{equation}\label{G def}
G(n) = \prod_{\substack{p\mid n \\ p>2}} \bigg( 1 - \frac1{(p-1)^2} \bigg)^{-1} \prod_{p^{\alpha}\parallel n} \left(1-\frac{1}{p^{\alpha}(p-1)}\right).
\end{equation}
%

So to understand the average value of $K(N)$, we are forced to deal with the correlation between the multiplicative function $F$, evaluated at $N-1$, and the multiplicative function $G$ evaluated at the neighboring integer $N$. It is perhaps somewhat surprising that the average values of $C_2 F(N-1) G(N)$ described in Theorem \ref{N odd average for K} come out to simple rational numbers.
%
%
%

The fact that we can successfully compute average values of the function $K^*$, even though it is not truly multiplicative, makes it natural to wonder whether we can analyze $K^*$ in other ways; this is indeed the case. Our next result is an analogue for $K^{*}(N)$ of a classical result of Schoenberg \cite{Sch} for the function $n/\phi(n)$. Recall that a \emph{distribution function} $D(u)$ is a nondecreasing, right-continuous function $D\colon \R\to[0,1]$ for which $\lim_{u\to-\infty} D(u)=0$ and $\lim_{u\to\infty} D(u)=1$.

\begin{theorem} \label{distribution function theorem} The function $K^*$ possesses a distribution function relative to the set of all natural numbers $N$. In other words, there exists a distribution function $D(u)$ with the property that at each of its points of continuity,
\[ D(u) = \lim_{x\to\infty} \frac{1}{x}\#\{N\leq x \colon K^{*}(N) \leq u\}. \]
\end{theorem}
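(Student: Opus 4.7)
The plan follows the template of Schoenberg's classical proof for $n/\phi(n)$, adapted to deal with the fact that $K^{*}$ is a product of multiplicative functions evaluated at the two consecutive integers $N-1$ and $N$. Using equation~\eqref{K decomposition}, write $K^{*}(N) = C_2\, F(N-1)\, H(N)$, where $H(N) := G(N)\cdot N/\phi(N)$ is genuinely multiplicative. From \eqref{F def} and \eqref{G def}, $\log F(p) = O(1/p^2)$ and $\log H(p) = O(1/p)$; in particular the tail sums $\sum_{p>y}(\log F(p))^2/p$, $\sum_{p>y}(\log H(p))^2/p$, and their analogues with squares replaced by first powers, all tend to $0$ as $y\to\infty$. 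These are the Erd\H{o}s--Wintner-type estimates that will drive the argument.

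For each threshold $y\ge 2$, let $P_y := \prod_{p\le y} p$ and split $F = F_y \widetilde{F}_y$, $H = H_y \widetilde{H}_y$, where the subscripts retain only primes $\le y$. The truncated function $K^{*}_y(N) := C_2\, F_y(N-1)\, H_y(N)$ depends only on the residue class of $N$ modulo $P_y$, so it takes finitely many values, each on a set of positive natural density; hence $K^{*}_y$ possesses a distribution function $D_y$ on $\N$. For the tail, $\log\widetilde{F}_y$ and $\log\widetilde{H}_y$ are additive functions of $n$ supported on prime powers $p^\alpha$ with $p>y$. The Tur\'an--Kubilius inequality gives
\[
\frac{1}{x}\sum_{n\le x}\bigl(\log\widetilde{H}_y(n)\bigr)^2 \ll \sum_{p^\alpha,\,p>y}\frac{(\log H(p^\alpha))^2}{p^\alpha} + \bigg(\sum_{p^\alpha,\,p>y}\frac{\log H(p^\alpha)}{p^\alpha}\bigg)^{\!2},
\]
and analogously for $\widetilde{F}_y$. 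By the first paragraph both right-hand sides are $o_{y\to\infty}(1)$ uniformly in $x$, and the shift $n = N-1$ is harmless since it only translates the summation range.

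Combining the two pieces, $K^{*}(N) = K^{*}_y(N)\cdot \widetilde{F}_y(N-1)\,\widetilde{H}_y(N)$, and the tail factor converges to $1$ in density uniformly in $x$ as $y\to\infty$. It follows that the sequence $(D_y)$ is Cauchy in the L\'evy metric, converging weakly to a distribution function $D$, which is the desired distribution function of $K^{*}$. The main obstacle is ensuring that the tail bound is uniform in $x$: this is precisely what the Tur\'an--Kubilius inequality provides, once the prime sums have been checked. The coupling between the arguments $N-1$ and $N$ causes no difficulty, because by CRT the pair $(N-1\bmod P_y,\, N\bmod P_y)$ realizes each consecutive pair of residues modulo $P_y$ exactly once as $N$ runs through a period, which yields the explicit finite distribution of $K^{*}_y$.
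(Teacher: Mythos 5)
Your approach is genuinely different from the paper's. The paper proceeds by the method of moments: it first shows that the moments $\mu_k = \lim_{x\to\infty} x^{-1}\sum_{N\le x} K^*(N)^k$ exist (by re-running the configuration-space computation of Section~\ref{odd section}), then establishes the growth bound $\log\mu_k \ll k\log\log k$ using $K^*(N)\le N/\phi(N)$ and Schur's evaluation of the moments of $N/\phi(N)$, and finally invokes a standard Carleman-type criterion to pass from moments to a distribution function. You instead run the Erd\H{o}s--Wintner/Schoenberg truncation argument with the Tur\'an--Kubilius inequality controlling the tail, after splitting $K^* = C_2\,F(N-1)\,H(N)$ with $H = G\cdot N/\phi(N)$ multiplicative. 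Both routes are viable; the moment method has the advantage of reusing the configuration machinery already in place and producing explicit Euler products for each $\mu_k$, while your route is closer to the classical proof for $n/\phi(n)$ and isolates the only new difficulty (the shift in the argument of $F$) cleanly.

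There is, however, a slip in your treatment of the truncated function. You assert that $K^*_y(N) = C_2\,F_y(N-1)\,H_y(N)$ depends only on the residue of $N$ modulo $P_y = \prod_{p\le y} p$, hence takes finitely many values each on a set of positive density. This is false: while $F_y(N-1)$ depends only on which primes $\le y$ divide $N-1$ (so only on $N\bmod P_y$), the factor $H_y$ inherits from $G$ a dependence on the exact exponents $\nu_p(N)$ for $p\le y$, not merely on divisibility. For instance, taking $y=2$, one has $H_2(2)\neq H_2(4)$ even though $2\equiv 4\pmod{P_2}$. Thus $K^*_y$ is not periodic modulo $P_y$ and takes countably many values. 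The gap is fixable: one can cap the exponents at a level $M$, noting that replacing $\nu_p(N)$ by $\min(\nu_p(N),M)$ in $\log H_y$ introduces an error $O(2^{-M})$ uniformly, and let $M\to\infty$ after $y$; alternatively, one can argue directly via CRT that the joint density of $(\nu_p(N-1),\nu_p(N))_{p\le y}$ exists and that the exceptional set where some $\nu_p(N)\ge M$ has density $o_{M\to\infty}(1)$, so that $D_y$ exists. With $D_y$ in hand, the rest of your argument---the Tur\'an--Kubilius tail estimate (your prime sums $\sum_{p>y}(\log F(p))^2/p$, $\sum_{p>y}(\log H(p))^2/p$, etc.\ do indeed tend to $0$, since $\log F(p)=O(1/p^2)$ and $\log H(p)=O(1/p)$) and the Cauchy-in-L\'evy-metric conclusion---goes through.
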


As a consequence of Theorems~\ref{N odd average for K} and~\ref{N prime average for K}, we are able to show
that the main result of~\cite{DS} is consistent with various unconditional results.
As mentioned above, the restriction imposed by the Hasse bound creates a short-interval problem in any study of
$M_E(N)$ when $N$ is held fixed.  Indeed, the interval is so short that not even the Riemann hypothesis is
any help.  This problem is circumvented in~\cite{DS} by assuming a conjecture in the spirit of the
classical Barban--Davenport--Halberstam theorem.

\begin{conj} \label{BDH}
Recall the notation $\theta(x;q,a) = \sum_{p\le x,\, p\equiv a\mod q} \log p$.
Let $0<\eta\le 1$ and $\beta>0$ be real numbers. Suppose that $X$, $Y$, and $Q$ are positive real numbers satisfying $X^\eta \le Y \le X$ and $Y/(\log{X})^{\beta} \le Q \le Y$. Then
\[
\sum_{q\le Q} \sum_{\substack{1\leq a \leq q\\(a,q)=1}} \bigg| \theta(X+Y;q,a) - \theta(X;q,a) - \frac Y{\phi(q)} \bigg|^2 \ll_{\eta,\beta}
YQ\log X.
\]
\end{conj}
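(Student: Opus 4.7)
Since the statement is labelled a Conjecture rather than a theorem, I do not expect the authors to prove it in this paper; it is invoked as a hypothesis of \cite{DS} that provides the conditional backdrop for the statistical study of $K^{*}(N)$ carried out here. What follows outlines the natural line of attack one would take if one tried to establish it.

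The plan would be to adapt Gallagher's proof of the classical Barban--Davenport--Halberstam theorem to the short-interval setting. First I would replace $\theta(X+Y;q,a)-\theta(X;q,a)$ by the corresponding sum of $\Lambda(n)$ over $X<n\le X+Y$ with $n\equiv a\pmod{q}$, at the cost of a negligible contribution from prime powers, and then expand the indicator function of the residue class $a\bmod q$ via Dirichlet characters. After squaring and summing over $a$ coprime to $q$, orthogonality collapses the variance to the mean-square
\[
\sum_{q\le Q}\frac{1}{\phi(q)}\sum_{\chi\neq\chi_{0}\bmod q}\bigl|\psi(X+Y,\chi)-\psi(X,\chi)\bigr|^{2},
\]
where $\chi_{0}$ denotes the principal character. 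The standard route for bounding such an object is via the explicit formula for $\psi(x,\chi)$ combined with zero-density estimates for Dirichlet $L$-functions, preceded if necessary by a Heath-Brown or Vaughan identity that splits $\Lambda$ into Type~I and Type~II bilinear pieces of lengths calibrated to $Y$ and $Q$.

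The main obstacle is the combination of the short interval length $Y\ge X^{\eta}$ with the requirement that the modulus range extend all the way up to $Q\le Y$. Known zero-density results (Huxley, Heath-Brown, Perelli, Jutila) together with large-sieve input handle the range $Y\ge X^{7/12+\ep}$ comfortably, and with additional averaging in $q$ one can press slightly past $Y\ge X^{1/2+\ep}$; but reaching $Y\ge X^{\eta}$ for \emph{arbitrarily small} fixed $\eta>0$ is essentially a short-interval Density Hypothesis for Dirichlet $L$-functions holding uniformly in $q$, which is strictly beyond current technology. This is precisely why the statement must be left as a conjecture, and why the elliptic-curve average value theorem of \cite{DS} that motivates the present paper is conditional. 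A more modest fallback, which may well suffice for applications, would be to prove the estimate on average over $Y$ or $Q$; this reduces to controlling fourth moments of $\psi(X+Y,\chi)-\psi(X,\chi)$ averaged over moduli, which is somewhat more accessible through existing mean-value machinery.
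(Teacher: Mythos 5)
You correctly recognize that this is labelled a Conjecture and is not proved in the paper; it serves only as the hypothesis under which Proposition~\ref{DS prop} (from \cite{DS,DScor}) is established. There is therefore no proof in the paper to compare against, and your answer is the right one. Your sketch of the natural line of attack (Gallagher-style variance computation, orthogonality, explicit formula plus zero-density input) and your identification of the obstruction --- that arbitrary $\eta>0$ amounts to a short-interval density hypothesis uniform in the modulus, far beyond current technology --- agree with the paper's own remark, which cites Languasco, Perelli, and Zaccagnini~\cite{LPZ} for the unconditional range $\eta>\tfrac{7}{12}$ and the GRH-conditional range $\eta>\tfrac12$.
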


\begin{remark} We remark that Languasco, Perelli, and Zaccagnini~\cite{LPZ} have established Conjecture~\ref{BDH} in the range $\eta>\frac7{12}$; they also showed, assuming the generalized Riemann hypothesis, that any $\eta>\frac12$ is admissible.
\end{remark}

Given integers $a$ and $b$ satisfying $-16(4a^3+27b^2)\ne 0$, let $E_{a,b}$ denote the elliptic curve given
by the Weierstrass equation~\eqref{w eqn}.  Then, given positive parameters $A$ and $B$, let $\E(A,B)$ denote
the set defined by
\begin{equation*}
\E(A,B) = \{ E_{a,b}\colon |a|\le A,\, |b|\le B,\, -16(4a^3+27b^2)\ne0 \}
\end{equation*}
In~\cite{DS,DScor}, David and the third author established the following average value theorem (in fact a stronger version of it) for $M_E(N)$ taken over the family $\E(A,B)$.

\begin{prop} \label{DS prop}
Assume the Barban--Davenport--Halberstam estimate (Conjecture~\ref{BDH}) holds for some $\eta<\frac12$. Let $\ep$ be a positive real number, and let $A>N^{1/2+\ep}$ and $B>N^{1/2+\ep}$ be real numbers satisfying $AB > N^{3/2+\ep}$. Then for any positive real number $R$,
\[
\frac{1}{\#\E(A,B)}\sum_{E \in \E(A,B)} M_E(N) = \frac{K^*(N)}{\log N} + O_{\eta,\ep,R}\bigg(\frac{1}{(\log{N})^R}\bigg).
\]
\end{prop}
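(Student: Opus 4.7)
The plan is to follow the standard pattern for averaging $M_E(N)$ over families, reducing the problem first to counting curves over $\F_p$ with prescribed trace of Frobenius, then to a sum over primes in a short interval that Conjecture~\ref{BDH} is designed to handle. First I would swap the order of summation:
\[
\sum_{E \in \E(A,B)} M_E(N) = \sum_{p} \#\{(a,b)\colon |a|\le A,\,|b|\le B,\,\#E_{a,b}(\F_p)=N\},
\]
and invoke the Hasse bound to restrict $p$ to the short interval $I_N := [(\sqrt{N}-1)^{2},(\sqrt{N}+1)^{2}]$.

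Next I would prove an equidistribution lemma: for a fixed prime $p$ and fixed residues $(\alpha,\beta) \in \F_p^2$, the number of $(a,b) \in [-A,A]\times[-B,B]$ reducing to $(\alpha,\beta)$ modulo $p$ is $(2A+1)(2B+1)/p^{2} + O(A/p + B/p + 1)$. Combining this with the fact that $\#E_{a,b}(\F_p)=N$ depends only on $(a,b) \bmod p$ (when $p \nmid \disc$), and summing the error over the at most $p^{2}$ residue classes and over $p \in I_N$, the hypothesis $A,B > N^{1/2+\ep}$ together with $AB > N^{3/2+\ep}$ ensures these error contributions are negligible compared to $\#\E(A,B)/(\log N)^{R}$. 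The main term becomes
\[
\frac{1}{\#\E(A,B)} \sum_{E \in \E(A,B)} M_E(N) = \sum_{p \in I_N} \frac{N_p(t)}{p^{2}} + (\text{small}),
\]
where $t = p+1-N$ and $N_p(t) = \#\{(a,b) \in \F_p^{2}\colon \tr(\mathrm{Frob}_p\mid E_{a,b}) = t\}$. Then I would apply the Deuring-Birch-Schoof formula to write $N_p(t) = p\cdot H(t^{2}-4p)$ (up to well-understood adjustments at small primes and $p\mid t$), where $H$ is the Hurwitz-Kronecker class number.

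The next step is to express $H(t^{2}-4p)$ via the Dirichlet class number formula as $\sqrt{|t^{2}-4p|}\cdot L(1,\chi_{t^{2}-4p})/\pi$ (roughly), expand the $L$-value as a sum $\sum_{d} \chi(d)/d$, and truncate. After swapping sums, the inner sum over $p \in I_N$ with $p \equiv N-1+t \pmod{d}$ and $t$ varying can be rearranged as a sum of the type controlled by Conjecture~\ref{BDH}: one is averaging $\theta(X+Y;q,a) - \theta(X;q,a) - Y/\phi(q)$ in squared form over $q\le Q$, with $X \asymp N$, $Y \asymp \sqrt{N}$ (so $\eta = 1/2$, which is why the proposition hypothesizes $\eta<\tfrac12$), and $Q$ slightly less than $Y$. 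Cauchy-Schwarz converts the BDH estimate into a main term $Y/\phi(q)$ for each residue class, with acceptable error $O(x/(\log x)^{R})$ after standard manipulations.

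Finally I would verify that the resulting main term factors into an Euler product whose local factor at each prime $\ell$ equals the corresponding local factor in $K(N) \cdot N/\phi(N) = K^{*}(N)$ defined in Definition~\ref{K definition}: the contribution from $\ell \nmid N$ matches $1 - (\leg{N-1}{\ell}^{2}\ell+1)/((\ell-1)^{2}(\ell+1))$ (the Kronecker symbol emerging from the evaluation of $\chi_{t^{2}-4p}(\ell)$ averaged in $t$), while the $\ell \mid N$ factor comes from the local analysis at primes dividing $\#E(\F_p)$; the overall $1/\log N$ is the density-of-primes weight from the short interval $I_N$. The main obstacle is unquestionably the short-interval prime sum: the interval has length only $4\sqrt{N}+O(1)$, which sits below even the GRH-conditional threshold of Huxley-type results, so Conjecture~\ref{BDH} with $\eta < 1/2$ is essential and must be applied with careful bookkeeping of the range of moduli $q$ and the dependence of the error on $R$, $\ep$, and $\eta$. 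All remaining pieces are standard sieve-theoretic and class-number manipulations.
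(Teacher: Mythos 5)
This proposition is not proved in the paper at hand: it is quoted verbatim from David and Smith (references \cite{DS} and \cite{DScor}, with \cite{CDKS} handling the extension to even $N$), and the present paper explicitly treats it as an external input. There is therefore no ``paper's own proof'' to compare against, only the cited source.

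That said, your sketch is a faithful high-level outline of the argument in \cite{DS} (which in turn follows the Fouvry--Murty and David--Pappalardi template): swap summation order, use the Hasse bound to restrict $p$ to $[(\sqrt{N}-1)^2,(\sqrt{N}+1)^2]$, count Weierstrass models in the box via equidistribution modulo $p$ (this is where the size hypotheses $A,B > N^{1/2+\ep}$ and $AB > N^{3/2+\ep}$ enter), pass to counting $\F_p$-isomorphism classes with prescribed trace via Deuring's theorem, express the Hurwitz--Kronecker class numbers through $L(1,\chi_{t^2-4p})$, truncate the $L$-series, and control the resulting prime sums in short intervals by Conjecture~\ref{BDH}; the local Euler factors in Definition~\ref{K definition} then emerge from averaging the characters $\chi_{t^2-4p}$ over $t$ and from the congruence bookkeeping at primes dividing $N$ and $N-1$. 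Your diagnosis of the short-interval bottleneck (length $\approx 4\sqrt{N}$, hence $\eta$ at the threshold $\tfrac12$) is exactly the reason BDH with $\eta<\tfrac12$ is hypothesized. Two small points worth flagging: the Deuring count gives roughly $(p-1)\,H(t^2-4p)$ Weierstrass pairs per trace value rather than $p\,H(t^2-4p)$, and the published argument in \cite{DS} handles only odd $N$; both are minor and you signal awareness of such adjustments. The genuinely delicate parts --- the precise truncation of $L(1,\chi)$, the careful bookkeeping of moduli $Q$ in the BDH application, and the derivation of the exact local factors at $p\mid N$ --- are compressed to a sentence each in your sketch, so this is a plan rather than a proof, but the plan is the right one.
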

\begin{remarks}\mbox{ }
\begin{enumerate}
\item It is not necessary to assume that Conjecture~\ref{BDH} holds for a fixed $\eta<1/2$.  It is enough to assume
that it holds for $Y=\sqrt X/(\log X)^{\beta+2}$.
\item The originally published formula in~\cite{DS} contained an error in the definition of $K^*(N)$, which was corrected in~\cite{DScor} to the form given in Definition~\ref{K definition}. See the end of Section~\ref{odd section} for further discussion of the original version of $K^*(N)$.
\item The proof of Proposition~\ref{DS prop} given in~\cite{DS} is restricted to odd values of $N$, but further work by Chandee, Koukoulopoulos, David, and Smith~\cite{CDKS} establishes the proposition for even values of $N$ as well.
\end{enumerate}
\end{remarks}

We note, as in~\cite{Kow}, that computing the average value of $M_E(N)$ over the integers $N\le x$ is easily
seen to be equivalent to the prime number theorem.
In particular,
\begin{equation}\label{MEN and PNT}
\sum_{N\le x}M_E(N)=\sum_{p\le(\sqrt x+1)^2}\#\{N\le x\colon \#E(\F_p)=N\}=\pi(x)+O\left(\sqrt x\right).
\end{equation}
Similarly, the average value of $M_E(N)$ taken over the
integers $N\le x$ that satisfy a congruence condition is equivalent to an
appropriate application of the Chebotarev density theorem.  For example, if the $2$-division field of $E$ is
an $S_3$-extension of $\Q$, then the Chebotarev density theorem implies that
\begin{equation*}
\sum_{\substack{N\le x\\ N\text{ odd}}}M_E(N)\sim\frac{1}{3}\frac{x}{\log x}.
\end{equation*}
(The calculation of the constant $\frac13$ reduces to the fact that two thirds of the elements of ${\rm GL}_2(\Z/2\Z)$, which is the automorphism group of $E[2]$, have even trace.)
If $E$ is given by the Weierstrass equation~\eqref{w eqn}, the $2$-division field is easily seen to be the
splitting field of the polynomial $X^3+aX+b$.  Since almost all cubics (when ordered by height) have $S_3$ as their Galois groups, it seems reasonable to conjecture that
\begin{equation}\label{N odd double average}
\frac1{\#\E(A,B)} \sum_{\substack{N \le x \\ N \text{ odd}}} \sum_{E\in\E(A,B)} M_E(N) = \frac x{3\log x}
	+ O\bigg( \frac x{(\log x)^2} \bigg),
\end{equation}
provided that $A$ and $B$ are growing fast enough with respect to $x$. A precise version of this conjecture was established by Banks and Shparlinski \cite[Theorem 19]{BS}. (In fact, their theorem shows that an analogous estimate holds with the condition ``$N$ odd'' replaced by ``$m \nmid N$'', for any given integer $m$.) The asymptotic result \eqref{N odd double average}, together with the result of Theorem~\ref{N odd average for K} for odd $N$, shows that if we average the two sides of the equation in Proposition~\ref{DS prop}, we obtain consistent results (unconditionally).
Similarly, the result of Theorem~\ref{N odd average for K} for all $N$ allows us to infer the asymptotic formula
\begin{equation*}
\frac1{\#\E(A,B)} \sum_{N \le x} \sum_{E\in\E(A,B)} M_E(N) = \frac x{\log x} + O\bigg( \frac x{(\log x)^2} \bigg),
\end{equation*}
which is consistent with equation~\eqref{MEN and PNT}.
We can therefore, if we wish, view Theorem~\ref{N odd average for K} as additional evidence for the conclusion of Proposition~\ref{DS prop}.

A similar problem arises if we consider only primes $p$.
Computing the average value of $M_E(p)$ over the primes $p\le x$ is easily seen to be
equivalent to the famous Koblitz conjecture~\cite{Kob}:

\begin{conj}[Koblitz] \label{Koblitz}
Given an elliptic curve $E$ defined over the rational field $\Q$, there exists a constant $C(E)$ with the property that as $x\to\infty$,
\[
\sum_{\substack{p \le x \\ p \text{ prime}}} M_E(p) \sim C(E) \frac x{(\log x)^2}.
\]
\end{conj}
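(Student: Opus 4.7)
The plan is to attack Koblitz's conjecture via the sieve-theoretic framework developed by Miri--Murty, Cojocaru, and Balog--Cojocaru--David. The starting point is the sifting set $\A_x := \{\#E(\F_p) : p \leq x,\, p \text{ of good reduction}\}$, of cardinality $\pi(x) + O(1)$. For each prime $\ell$, the proportion of elements of $\A_x$ divisible by $\ell$ is governed by the Chebotarev density theorem applied to the $\ell$-division field $\Q(E[\ell])$: since $\#E(\F_p) \equiv p+1-a_p \pmod \ell$ depends only on the Frobenius class of $p$ in $\Gal(\Q(E[\ell])/\Q) \hookrightarrow \GL_2(\F_\ell)$, the local density equals $|\{g \in G_\ell : \det g - \tr g + 1 \equiv 0 \pmod \ell\}|/|G_\ell|$, where $G_\ell$ is the mod-$\ell$ Galois image. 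By Serre's open image theorem (for non-CM $E$), $G_\ell = \GL_2(\F_\ell)$ for all sufficiently large $\ell$, and a direct count recovers local factors that assemble into the Koblitz constant $C(E)$; at the finitely many bad primes one substitutes the true image of Galois.

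With these local densities established, I would next execute a Selberg (or $\beta$-) upper-bound sieve at level $z = x^{1/2-\ep}$, using an effective form of the Chebotarev density theorem (unconditional via Lagarias--Odlyzko, or sharper under GRH) to control the remainders. This yields the unconditional upper bound $M_E(p) \ll_E x/(\log x)^2$ of the correct order of magnitude, confirming the conjectured constant $C(E)$ is at least compatible with unconditional sieve output. A weighted-sieve (Chen/Richert) variant, pushed to its combinatorial limit, produces the ``almost Koblitz'' lower bound that $\#E(\F_p)$ has at most a bounded number of prime factors for $\gg x/(\log x)^2$ primes $p \leq x$.

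The main obstacle---and the reason Koblitz's conjecture remains open---is the parity problem. A one-dimensional combinatorial sieve cannot distinguish integers with one prime factor from those with three, five, and so on, so no sieve axiom by itself will produce an asymptotic formula. To break parity one needs a Type~II (bilinear) estimate: nontrivial cancellation in sums roughly of the form
\[
\sum_{\substack{M < m \le 2M \\ mn \le 2x}} \alpha_m \beta_n \cdot \#\{p \le x : p+1-a_p = mn\}
\]
for arbitrary bounded coefficients $\alpha_m,\beta_n$, with $M$ ranging through a suitable interval. Given such an input, the asymptotic sieve for primes in the style of Bombieri or Friedlander--Iwaniec (as executed for $a^2+b^4$ and $x^3+2y^3$) would yield the asserted asymptotic, with $C(E)$ emerging as the main term from the Chebotarev computation above.

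I expect this bilinear step to be the decisive obstacle and not currently within reach: the Frobenius traces $a_p$ are controlled by the Sato--Tate distribution but do not appear to respect any multiplicative structure on the integers $p+1-a_p$, so there is no evident arithmetic mechanism (analogous to the Gaussian-integer factorization used by Friedlander--Iwaniec) that one could exploit to dissect these sums. The remainder of the argument---combining the Chebotarev input, a flexible sieve (such as the linear sieve or Harman's alternative), and the bilinear estimate---is largely routine by modern standards; isolating and proving the required Type~II cancellation for $a_p$-twisted sums is where any genuine progress would have to occur.
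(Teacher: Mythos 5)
There is no proof of this statement to compare yours against: the paper records it verbatim as Koblitz's conjecture (Conjecture~\ref{Koblitz}, attributed to \cite{Kob}), and it is open. What the paper proves are only consistency checks \emph{on average over families of curves}: the unconditional asymptotic \eqref{N prime double average} of Balog--Cojocaru--David \cite{BCD}, Jones's averaging of the conjectural constant $C(E)$ \cite{J}, and the paper's own Theorem~\ref{N prime average for K}, which shows that averaging the conditional formula of Proposition~\ref{DS prop} over primes $N=p\le x$ reproduces the same constant $\tfrac23 C_2 J$. None of this yields the asymptotic for a single fixed curve $E$.

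Your proposal does not prove the statement either, and you say so yourself: the decisive ingredient, a Type~II (bilinear) estimate strong enough to break the parity barrier for the integers $p+1-a_p$, is asserted to be out of reach, so the sieve framework you describe delivers at best an upper bound of the right order of magnitude and an almost-prime lower bound, never the claimed asymptotic. That is the gap, and it is not a technicality --- it is exactly why the conjecture is open. A secondary problem is that even your identification of the main term is too naive: $C(E)$ is not in general the product over $\ell$ of densities computed from each mod-$\ell$ image $G_\ell\subseteq \GL_2(\F_\ell)$ separately, because the division fields $\Q(E[\ell])$ can be entangled; one must work with the image of Galois in $\GL_2(\widehat{\Z})$, as in Zywina's refinement \cite{Z}. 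The example of Jones cited in the paper --- a curve with $C(E)=0$ that is not isogenous to a curve with nontrivial rational torsion --- shows that the per-prime Euler product you describe can produce a nonzero constant when the true constant vanishes. So both the constant and, fatally, the asymptotic evaluation are missing; what you have written is an accurate survey of the obstruction, not a proof.
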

The constant $C(E)$ appearing in Koblitz's conjecture may be zero, in which case the asymptotic is interpreted
to mean that there are only finitely many primes $p$ such that $M_E(p)>0$.
An obvious obstruction to there being infinitely many primes with $M_E(p)>0$ is for $E$ to be isogenous to a
curve possessing nontrivial rational torsion. It was once thought that this was the only case when $C(E)=0$, but this turned out to be false; see \cite[Section 1.1]{Z} for an explicit counterexample due to Nathan Jones.

The main theorem of~\cite{BCD} may be reinterpreted to say that the asymptotic formula
\begin{align} \label{N prime double average}
\frac1{\#\E(A,B)} \sum_{\substack{p \le x \\ p \text{ prime}}} \sum_{E\in\E(A,B)} M_E(p)
&= \tfrac23C_2J \int_2^x \frac{dt}{(\log t)^2} + O_A\bigg( \frac x{(\log x)^A} \bigg) \\
&= \tfrac23C_2J \frac x{(\log x)^2} + O\bigg( \frac x{(\log x)^3} \bigg)  \notag
\end{align}
holds unconditionally for $A$ and $B$ growing fast enough with respect to $x$.
Jones~\cite{J} has averaged the explicit formula for $C(E)$ over the family $\E(A,B)$ and shown that the result is consistent with the above formula.  We view this as providing good evidence for the Koblitz conjecture.  Equation~\eqref{N prime double average}, together with our Theorem~\ref{N prime average for K}, shows that we obtain consistent results (unconditionally) when we average the two sides of the equation in Proposition~\ref{DS prop} over the primes $N\le x$.
Thus all of the conjectures and conditional theorems mentioned above reinforce one another's validity.

We note that the asymptotic formulas~\eqref{N odd double average} and~\eqref{N prime double average}, in which we average over odd integers $N$ or primes $p$ up to $x$, both hold for a much wider range of $A$ and $B$ than is suggested by Proposition~\ref{DS prop}.  In particular, Banks and Shparlinski~\cite{BS} developed a character-sum
argument based on a large sieve inequality to show that one may take $A, B>x^{\epsilon}$ and $AB>x^{1+\epsilon}$ in elliptic-curve averaging problems of this sort, when the average number of elliptic curve isomorphism classes modulo $p$ satisfying the desired property is somewhat large.
Baier~\cite{Bai} was able to adapt this technique to make similar improvements to the required length of the average in the (fixed trace) Lang--Trotter problem, where the average number of classes modulo $p$ is significantly smaller. Given Baier's result, it seems possible 
that Proposition~\ref{DS prop}, in which the odd integer $N$ is fixed, could itself be shown to hold provided that $A, B>N^{\epsilon}$ (note that such an improvement would still seem to require that $AB>N^{3/2+\epsilon}$ rather than the weaker condition $AB>N^{1+\epsilon}$). As we are primarily concerned with the multiplicative function $K^*$ herein, however, we have not pursued this line of thinking.

The remainder of the article is organized as follows. We begin by establishing Theorem~\ref{N odd average for K} in Section~\ref{odd section}. Briefly, we approximate the function $K^*(N)$ by a similar function whose values depend only upon the small primes dividing $N$ and $N-1$; we then calculate the average value of this truncated function by partitioning the numbers being averaged over into ``configurations'' based on local data about $N$ and $N-1$ at these small primes. We prove the related Theorem~\ref{N prime average for K} in Section~\ref{prime section}; here the calculation of the main term is simpler since the argument of $K^*$ is always a prime, while the estimation of the error term is more complicated due to the need to invoke results on the distribution of primes in arithmetic progressions. Finally, we establish Theorem~\ref{distribution function theorem} in Section~\ref{distribution section} by studying the moments of~$K^{*}$.

\subsection*{Notation} As above, we employ the Landau--Bachmann $o$ and $O$ notation, as well as the associated Vinogradov symbols $\ll$, $\gg$ with their usual meanings; any dependence of implied constants on other parameters is denoted with subscripts. We reserve the letters $\ell$ and $p$ for prime variables. For each natural number $n$, we let $P(n)$ denote the largest prime factor of $n$, with the convention that $P(1)=1$. The natural number $n$ is said to be \emph{$y$-friable} (sometimes called \emph{$y$-smooth}) if $P(n)\leq y$. We write $\Psi(x,y)$ for the number of $y$-friable integers not exceeding~$x$.
By a \emph{partition} of a set $S$, we mean any collection of disjoint sets whose union is $S$; we do \emph{not} require that all of the sets in the collection be nonempty.

\section{The average value of $K^*$} \label{odd section}
For notational convenience, set $R(N) := N/\phi(N)$, so that $K^{*}(N) = K(N)R(N)$. By definition, $K(N)$ is a product over primes, while $R(N) = \prod_{\ell \mid N}(1-1/\ell)^{-1}$ can also be viewed as such a product. Moreover, it is the small primes that have the largest influence on the magnitude of these products.  This suggests it might be useful to study the truncated functions $K_z$ and $R_z$ defined by
\[ K_z(N):= \prod_{\substack{p\nmid N\\ p \leq z}} \bigg( 1 - \frac{\leg{N-1}{p}^2p + 1}{(p-1)^2(p+1)} \bigg) \prod_{\substack{p\mid N \\ p \leq z}} \bigg( 1 - \frac1{p^{\nu_p(N)}(p-1)} \bigg),\]
and
\[ R_z(N):= \prod_{\substack{p \mid N \\ p \leq z}} \left(1-1/p\right)^{-1}. \]

We give the proof of the first half of Theorem \ref{N odd average for K}, concerning the average of $K(N) R(N)$ over all $N$, in complete detail. The proof of the second claim, concerning the average over odd $N$, can be proved in the same way; the necessary changes to the argument are indicated briefly at the end of this section.

The first half of Theorem \ref{N odd average for K} will be deduced from a corresponding estimate for the mean value of $K_z(N) R_z(N)$:

\begin{prop}\label{prop:oddavg} Let $x \geq 3$, and set $z := \frac{1}{10}\log{x}$. We have
\[ \sum_{N \leq x} K_z(N) R_z(N) = x + O(x^{3/4}). \]
\end{prop}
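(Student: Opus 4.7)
My plan is to partition $[1,x]$ according to the ``local type'' of $N$ at each prime $p \leq z$, exploiting the fact that $K_z(N) R_z(N) = \prod_{p \leq z} f_p(N)$, where each factor $f_p(N)$ depends only on $N$ modulo a small power of $p$. Specifically, there are three possible types at each prime $p \leq z$: $f_p$ takes the value $a_p := 1 - 1/(p-1)^2$ when $p \nmid N(N-1)$; the value $b_p := 1 - 1/((p-1)^2(p+1))$ when $p \mid N-1$; and the value $c_{p,\alpha} := (p/(p-1))(1 - 1/(p^{\alpha}(p-1)))$ when $p^{\alpha} \parallel N$. A configuration $\mathcal{C}$ is a choice of local type $\tau_p$ at each $p \leq z$, and the set of $N$ with configuration $\mathcal{C}$ is a union of $n(\mathcal{C}) := \prod_p n_p$ residue classes modulo $Q(\mathcal{C}) := \prod_p m_p$, where $(n_p, m_p)$ equals $(p-2, p)$, $(1, p)$, or $(p-1, p^{\alpha+1})$ in the three cases respectively. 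Only finitely many configurations contribute, since $N \leq x$ forces $\nu_p(N) \leq \log_p x$.

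Splitting the sum by configuration and using $\#\{N \leq x \colon N \equiv r \pmod{Q}\} = x/Q + O(1)$, I would obtain
\[
\sum_{N \leq x} K_z(N) R_z(N) = x \prod_{p \leq z} \rho_p + O\bigg(\prod_{p \leq z} \Sigma_p\bigg),
\]
where $\rho_p := \sum_{\tau_p} f_p(\tau_p) n_p / m_p$ is the truncated local density and $\Sigma_p := \sum_{\tau_p} f_p(\tau_p) n_p$. A direct calculation, hinging on the identity $\sum_{\alpha \geq 1}(1 - 1/(p^{\alpha}(p-1)))/p^{\alpha} = 1/(p-1) - 1/((p-1)^2(p+1))$, verifies that the corresponding \emph{untruncated} density equals $1$ exactly for every prime $p$; thus $\rho_p = 1 + O(1/x)$ and the main term collapses to $x + O(\log x)$. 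This recovers precisely the expected main term, and the fact that each local factor averages to $1$ is the ``miraculous'' feature that causes $K^{\ast}$ to have average value $1$.

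The main obstacle is controlling $\prod_{p \leq z} \Sigma_p$. Using the crude bounds $a_p, b_p \leq 1$ and $c_{p,\alpha} \leq p/(p-1)$ gives $\Sigma_p \ll p \log_p x \ll p \log x$, whence $\prod_{p \leq z} \Sigma_p \ll (\log x)^{\pi(z)} \prod_{p \leq z} p$. The choice $z = (\log x)/10$ is exactly calibrated so that both $\pi(z) \log \log x$ and $\sum_{p \leq z} \log p$ are $\sim (\log x)/10$ by the prime number theorem; consequently each of the two factors is $x^{1/10 + o(1)}$, and their product is $x^{1/5 + o(1)}$---comfortably inside the target $O(x^{3/4})$. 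For $x$ bounded the estimate is trivial, so the proposition follows for all $x \geq 3$.
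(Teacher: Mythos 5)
Your proof is correct, and it takes the same overall approach as the paper---partition $[1,x]$ by the ``configuration'' of $N$ at primes $p \le z$, observe that the untruncated local density at every prime is identically $1$ (the same remarkable identity $P_\A(\ell)+P_\B(\ell)+P_\C(\ell)=1$ that appears in the paper's Lemma~\ref{lem:identity}), and exploit the calibration $z=\frac{1}{10}\log x$ so that both the primorial $\prod_{p\le z}p$ and $(\log x)^{\pi(z)}$ are $x^{1/10+o(1)}$. Where you genuinely diverge from the paper is in the error analysis. The paper works with the full configuration space $\Ss$, which contains infinitely many tuples since the exponents $e_\ell$ are unbounded; it then has to split into $m_\sigma\le x$ and $m_\sigma>x$, and control the tail by Rankin's method (Lemma~\ref{msigma lemma}), aided by a counting lemma (Lemma~\ref{lem:nearlydetermined}). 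You instead observe at the outset that any $N\le x$ has $\nu_p(N)\le \log_p x$, so only a finite, explicitly bounded set of configurations can occur; this lets you bound the error directly by $\prod_{p\le z}\Sigma_p$ with $\Sigma_p \ll p\log x$, bypassing Rankin entirely. Your route is shorter and arguably cleaner, at the cost of replacing the clean exact identity $\sum_\sigma K_z(\sigma)R_z(\sigma)d_\sigma = 1$ with an approximate one, $\rho_p = 1 + O(1/x)$, which introduces a harmless $O(\log x)$ into the main term. Both versions land comfortably inside $O(x^{3/4})$.

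One small point worth making explicit if you write this up: when you assert that only the finitely many configurations with each $e_p\le \log_p x$ contribute, the configurations excluded are exactly those whose fibre $\{N\le x:\sigma_N=\sigma\}$ is empty. The individual estimate $\#\{N\le x: N\equiv r\ (\mathrm{mod}\ Q)\}=x/Q+O(1)$ remains valid for these (both sides are $O(1)$), so nothing is lost by summing only over the restricted set; but it is cleaner to say so, since the factorization $\sum_{\mathcal C} f(\mathcal C)n(\mathcal C) = \prod_p \Sigma_p$ relies on the restricted configuration set being a product over primes, which it is.
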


We will establish this proposition at the end of this section (it follows upon combining Lemmas~\ref{lem:related} and~\ref{lem:identity}). At this point, we show how Theorem~\ref{N odd average for K} can be deduced from the proposition.

\begin{proof}[Proof of Theorem~\ref{N odd average for K}, assuming Proposition~\ref{prop:oddavg}] It suffices to show that with $z = \frac{1}{10}\log{x}$,
	\begin{equation} \label{glitch fixer}
	\sum_{\substack{N\le x \\ N\text{ odd}}} \big| K_z(N) R_z(N) - K(N) R(N) \big| \ll x/z.
	\end{equation}
Now $0 \leq K(N) \leq K_z(N) \leq 1$ and $0 \leq R_z(N) \leq R(N)$, so that
\begin{align*} |K_z(N) R_z(N) - K(N) R(N)| &\leq |K_z(N)||R_z(N)-R(N)| + |K_z(N)-K(N)| R(N)\\
	&\leq  (R(N)-R_z(N)) + (K_z(N)-K(N))R(N). \end{align*}
Thus, it is enough to show that the sums up to $x$ of $R(N)-R_z(N)$ and $(K_z(N)-K(N))R(N)$ are also $\ll x/z$. As we are looking only for upper bounds, we may extend these sums over all $N \leq x$ and not only odd~$N$.

Write $R(N)= \sum_{d \mid n} g(d)$ for an auxiliary function $g$. By a straightforward calculation with the M\"{o}bius inversion formula, we see that $g$ vanishes except at squarefree integers $d$, in which case $g(d) = 1/\phi(d)$. Hence, for all real $t > 0$,
\begin{align} \sum_{N \leq t} R(N) = \sum_{N \leq t} \sum_{d \mid N} g(d) &= \sum_{d\le t} \frac1{\phi(d)} \sum_{\substack{N \leq t \\ d \mid N}} 1 \notag \\
&\le \sum_{d \leq t} \frac{t}{d\phi(d)} \notag \\
&\le t \sum_{d=1}^\infty \frac{1}{d\phi(d)} \notag \\
&= t \prod_{p}\left(1 + \frac{1}{p(p-1)} + \frac{1}{p^3(p-1)} + \dots \right) \ll t, \label{eq:onaverage}
\end{align}
so that $R(N)$ is bounded on average. Now writing $R_z(N)=\sum_{d\mid n}g_z(d)$ for an auxiliary function $g_z(d)$, one finds that $g_z$ vanishes except on squarefree $z$-friable integers $d$, in which case again $g_z(d)= 1/\phi(d)$. In particular, $g(d)-g_z(d)$ is nonnegative for all $d$, and $g(d)-g_z(d)=0$ when $d\le z$. We deduce that
\begin{align*}
\sum_{N \leq x} (R(N)-R_z(N)) = \sum_{N \leq x} \sum_{d \mid N} (g(d)-g_z(d)) &\leq \sum_{N \leq x} \sum_{\substack{d \mid N\\ d > z}} \frac{1}{\phi(d)} \\
&= \sum_{z<d\le x} \sum_{\substack{N\le x \\ d \mid N}} \frac{1}{\phi(d)} \le \sum_{d > z}\frac{x}{d\phi(d)}.
\end{align*}
Partitioning this last sum into dyadic intervals, we have
\begin{align*}
\sum_{N \leq x} (R(N)-R_z(N)) \le \sum_{k=1}^\infty \sum_{2^{k-1}z < d \le 2^kz} \frac{x}{d\phi(d)} &= x \sum_{k=1}^\infty \sum_{2^{k-1}z < d \le 2^kz} \frac{R(d)}{d^2} \\
&\le x \sum_{k=1}^\infty \frac1{(2^{k-1}z)^2} \sum_{d \le 2^kz} R(d) \\
&\ll x \sum_{k=1}^\infty \frac1{(2^{k-1}z)^2} 2^k z \\
&\ll \frac xz \sum_{k=1}^\infty \frac1{2^k} \ll \frac xz,
\end{align*}
where we used the estimate \eqref{eq:onaverage} in the second-to-last inequality. This proves the desired upper bound for the partial sums of $R(N)-R_z(N)$.

The partial sums of $(K_z(N)-K(N))R(N)$ are easier. Since each factor appearing in the products defining $K_z$ and $K$ has the form $1 - O(1/\ell^2)$, it follows that
$K(N)/K_z(N) \geq 1-O\left(\sum_{\ell > z} 1/\ell^2\right) \geq 1 - O(1/z)$. Thus, $K_z(N)-K(N) = K_z(N)(1-K(N)/K_z(N)) \leq 1-K(N)/K_z(N) \ll 1/z$. It follows that
\[ \sum_{N \leq x} (K_z(N)-K(N)) R(N) \ll \frac{1}{z}\sum_{N \leq x}R(N) \ll \frac xz,\]
using the estimate~\eqref{eq:onaverage} once more in the last step. This completes the proof of Theorem~\ref{N odd average for K}, assuming Proposition~\ref{prop:oddavg}.
\end{proof}

In the remainder of this section, we concentrate on proving Proposition \ref{prop:oddavg}. Our strategy, already alluded to in the introduction, is to partition the integers $N \leq x$ according to local data at small primes. We choose the partition so that the values $K_z(N)$ and $R_z(N)$ are constant along each set belonging to the partition (which we call a \emph{configuration}). For the remainder of this section, we continue to assume that $x\geq 3$ and that $z = \frac{1}{10}\log{x}$.

\begin{definition} \label{configuration space def}
We define the \emph{configuration space} $\Ss$ as the set of all $4$-tuples of the form \[ (\A, \B, \C, \{e_{\ell}\}_{\ell \in \B}),\] where the sets $\A, \B, \C$ partition the set of primes up to $z$, and the $e_{\ell}$ are positive integers. (Although $\Ss$ depends upon $z$ and hence $x$, we will not include this dependence in the notation.)
\end{definition}

To each $N \leq x$, we can associate a unique configuration in the following manner.

\begin{definition}\label{def:correspondingconfig} Given $N \leq x$, define three subsets of the primes in $[2,z]$ by setting $\A:= \{\ell \leq z: \ell \nmid N(N-1)\}$, $\B:=\{\ell \leq z: \ell \mid N\}$, and $\C:=\{\ell \leq z: \ell \mid N-1\}$. For each $\ell \in \B$, set $e_{\ell} := \nu_{\ell}(N)$.
Then $\sigma = (\A, \B, \C, \{e_{\ell}\}_{\ell \in \B}) \in \Ss$ is called the \emph{configuration} $\sigma$ corresponding to $N$ and is denoted $\sigma_{N}$.
\end{definition}

\begin{remark} One checks easily that the value $K_z(N) R_z(N)$ depends only on $\sigma=\sigma_{N}$. Thus, we often abuse notation by referring to $K_z(\sigma)$ and $R_z(\sigma)$ instead of $K_z(N)$ and $R_z(N)$. \end{remark}
	
We can rewrite the sum considered in Proposition \ref{prop:oddavg} in the form
\begin{equation}\label{eq:rewrite0} \sum_{N \leq x} K_z(N) R_z(N) = \sum_{\sigma \in \Ss} K_z(\sigma) R_z(\sigma)\sum_{\substack{N \leq x \\ \sigma_N = \sigma}}1. \end{equation}
In the next lemma, we estimate the inner sum on the right-hand side of \eqref{eq:rewrite0} in two ways.

\begin{lemma}\label{lem:innersum} For each $\sigma \in \Ss$, we have
\begin{equation}\label{eq:lesscrude} \sum_{\substack{N \leq x \\ \sigma_N = \sigma}}1 = d_{\sigma} x + O(x^{1/5}),\end{equation}
where
\begin{equation}\label{eq:dsigmadef} d_{\sigma}:= \bigg(\prod_{\ell \in \A} (1-2/\ell) \bigg) \bigg(\prod_{\ell \in \B}\frac{1}{\ell^{e_\ell}}(1-1/\ell)\bigg) \bigg(\prod_{\ell \in \C}\frac{1}{\ell}\bigg).\end{equation}
We also have the crude upper bound
\begin{equation}\label{eq:crudeupper}\sum_{\substack{N \leq x \\ \sigma_N = \sigma}}1 \leq x \prod_{\ell \in \B}\ell^{-e_\ell}\end{equation}
for any $\sigma\in\Ss$.
\end{lemma}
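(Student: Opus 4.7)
The plan is to translate the condition $\sigma_N = \sigma$ into a disjoint system of local congruence conditions, merge them via the Chinese Remainder Theorem into a single modulus $Q$, and then apply standard lattice-point counting. Inspecting Definition~\ref{def:correspondingconfig}, the condition $\sigma_N = \sigma$ says that, for each prime $\ell \leq z$: $N \not\equiv 0, 1 \pmod{\ell}$ if $\ell \in \A$; $\nu_\ell(N) = e_\ell$, equivalently $\ell^{e_\ell} \mid N$ and $\ell^{e_\ell+1} \nmid N$, if $\ell \in \B$; and $N \equiv 1 \pmod{\ell}$ if $\ell \in \C$. These conditions live on pairwise coprime prime powers and together they cut out a union of $k$ residue classes modulo
\[
Q := \prod_{\ell \in \A \cup \C}\ell \;\cdot\; \prod_{\ell \in \B}\ell^{e_\ell + 1},
\]
where counting local solutions yields $k = \prod_{\ell \in \A}(\ell-2)\prod_{\ell \in \B}(\ell-1)$. (The factor $\ell-1$ appears because, among residues modulo $\ell^{e_\ell+1}$, exactly $\ell - 1$ of them satisfy $\nu_\ell(\,\cdot\,) = e_\ell$, namely $\ell^{e_\ell}, 2\ell^{e_\ell}, \ldots, (\ell-1)\ell^{e_\ell}$.)

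A direct factor-by-factor computation shows that $k/Q$ equals the density $d_\sigma$ appearing in~\eqref{eq:dsigmadef}. Standard counting of integers in residue classes now gives
\[
\sum_{\substack{N \leq x \\ \sigma_N = \sigma}} 1 = \frac{k}{Q}\,x + O(k) = d_\sigma x + O(k),
\]
so~\eqref{eq:lesscrude} reduces to proving $k \ll x^{1/5}$. Since each local factor of $k$ is bounded by the corresponding prime, I have $k \leq \prod_{\ell \leq z}\ell = \exp(\vartheta(z))$, and Chebyshev's estimate $\vartheta(z) \leq 2z$ together with the calibrated choice $z = \tfrac{1}{10}\log x$ yields $k \leq \exp(\tfrac{1}{5}\log x) = x^{1/5}$, as required. (For $x$ so small that $z < 2$, the only configuration has $\A = \B = \C = \emptyset$ and the claim is trivial.)

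The crude upper bound~\eqref{eq:crudeupper} is then immediate: if $\sigma_N = \sigma$, then $\ell^{e_\ell} \mid N$ for every $\ell \in \B$, so by coprimality $N$ is a multiple of $M := \prod_{\ell \in \B}\ell^{e_\ell}$, and the multiples of $M$ in $[1,x]$ number at most $x/M$. The only point requiring genuine care is matching the Chebyshev bound against the choice of $z$: the constant $\tfrac{1}{10}$ in $z = \tfrac{1}{10}\log x$ is tuned precisely so that $\exp(\vartheta(z)) \leq x^{1/5}$ and the $O(k)$ error stays comfortably below the stated error term, which is why this particular threshold appears in Proposition~\ref{prop:oddavg}. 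Everything else is routine CRT bookkeeping.
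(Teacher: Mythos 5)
Your proof is correct and takes essentially the same approach as the paper's: translate the condition $\sigma_N = \sigma$ into a union of congruence classes modulo $m_\sigma = Q$, count the classes, and apply standard lattice-point counting, then bound the error via the size of $\prod_{\ell \le z}\ell$. The only minor difference is that you invoke the explicit Chebyshev bound $\vartheta(z)\le 2z$ (which works unconditionally for all $x$, with a clean statement for the degenerate case $z<2$), whereas the paper simply cites the prime number theorem and asserts the bound for large $x$; this is a small cosmetic improvement, not a genuine divergence in method. Your error term $O(k)$ with $k = \prod_{\ell\in\A}(\ell-2)\prod_{\ell\in\B}(\ell-1)$ is also slightly sharper than the paper's $O\bigl(\prod_{\ell\in\A\cup\B}\ell\bigr)$, but both are absorbed into $O(x^{1/5})$, so this changes nothing.
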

\begin{proof} The condition that $\sigma_{N}=\sigma$ is equivalent to a congruence condition on $N$ modulo \begin{equation}\label{eq:msigmadef} m_{\sigma}:= \left(\prod_{\ell \in \A \cup \C} \ell\right) \left(\prod_{\ell \in \B} \ell^{e_\ell+1}\right). \end{equation} Indeed, $\sigma_{N}=\sigma$ precisely when $N$ belongs to a union of $\prod_{\ell \in \A}(\ell-2) \prod_{\ell \in \B}(\ell-1)$ congruence classes modulo $m_{\sigma}$. This implies that
\[
\sum_{\substack{N \leq x\\\sigma_N = \sigma}} 1 = \frac x{m_\sigma} \prod_{\ell \in \A}(\ell-2) \prod_{\ell \in \B}(\ell-1) + O\bigg(\prod_{\ell \in \A \cup \B} \ell \bigg) = d_{\sigma} x + O\bigg(\prod_{\ell \leq z} \ell \bigg).
\]
By our choice of $z$ and the prime number theorem, $\prod_{\ell \leq z} \ell < x^{1/5}$ for large~$x$, and so we have established the formula~\eqref{eq:lesscrude}. To justify the inequality~\eqref{eq:crudeupper}, it suffices to observe that if $\sigma_N=\sigma$, then $\prod_{\ell \in \B} \ell^{e_\ell}$ divides~$N$.
\end{proof}

The modulus $m_{\sigma}$, defined in \eqref{eq:msigmadef}, will continue to play a key role in subsequent arguments. It will be convenient to know that $m_{\sigma}$ nearly determines $\sigma$; this is the substance of our next result.

\begin{lemma}\label{lem:nearlydetermined} For each natural number $m$, the number of $\sigma \in \Ss$ with $m_{\sigma}=m$ is $O(x^{1/4})$.
\end{lemma}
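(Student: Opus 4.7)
The plan is to exploit the rigidity in how $m_{\sigma}$ depends on $\sigma$: once we know the prime factorization of $m = m_{\sigma}$, almost all of the data making up $\sigma$ is forced. Recall that
\[ m_{\sigma} = \prod_{\ell \in \A \cup \C} \ell \cdot \prod_{\ell \in \B} \ell^{e_\ell+1}, \]
where $\A,\B,\C$ partition the primes up to $z$ and each $e_\ell\geq 1$. First I would observe that $m_\sigma$ has all of its prime factors $\leq z$, and that every prime $\ell \leq z$ appears in $m_\sigma$ (to the first power if $\ell \in \A \cup \C$, and to a power of at least $2$ if $\ell \in \B$). Consequently, if $m$ is not of the form $\prod_{\ell \leq z} \ell^{a_\ell}$ with every $a_\ell \geq 1$, then no $\sigma \in \Ss$ satisfies $m_\sigma = m$, and the count is zero.

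Otherwise, I would read off the structure of $\sigma$ directly from the exponents of $m$: for each $\ell \leq z$ with $\nu_\ell(m) \geq 2$, the prime $\ell$ is forced to lie in $\B$ with $e_\ell = \nu_\ell(m) - 1$; and for each $\ell \leq z$ with $\nu_\ell(m) = 1$, the prime $\ell$ is forced to lie in $\A \cup \C$, though without specifying which of the two sets. Thus $m$ determines $\B$, the exponents $\{e_\ell\}_{\ell \in \B}$, and the union $\A \cup \C$, so the only remaining freedom is the assignment of each prime in $\A \cup \C$ to either $\A$ or $\C$. This shows that the number of $\sigma$ with $m_\sigma = m$ is at most $2^{|\A\cup\C|}\leq 2^{\pi(z)}$.

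Finally, I would bound $2^{\pi(z)}$ using the trivial inequality $\pi(z)\leq z = \tfrac{1}{10}\log x$, giving
\[ 2^{\pi(z)} \leq 2^{(\log x)/10} = x^{(\log 2)/10} < x^{1/4} \]
for all $x \geq 3$ (since $(\log 2)/10 < 1/4$). This yields the claimed bound $O(x^{1/4})$. I do not anticipate any real obstacle in this argument: the only subtle point is remembering that every prime $\ell \leq z$ must lie in one of $\A$, $\B$, $\C$, so that missing primes in the factorization of $m$ rule out the existence of a configuration rather than providing additional freedom.
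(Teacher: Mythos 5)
Your proof is correct and follows the same overall approach as the paper: bound the ambiguity in $\sigma$ given $m$ by an exponential in $\pi(z)$, using that the exponents $e_\ell$ are then forced by the factorization of $m$. You sharpen the bookkeeping slightly by noting that $m$ already determines $\B$ (and hence $\A\cup\C$), giving $2^{\pi(z)}$ choices rather than the paper's cruder $3^{\pi(z)}$ over all partitions, and you use the trivial bound $\pi(z)\le z$ where the paper invokes the prime number theorem to obtain the stronger $x^{o(1)}$; either suffices comfortably for $O(x^{1/4})$.
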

\begin{proof} Suppose that $m_{\sigma}=m$, where $\sigma=(\A, \B, \C, \{e_{\ell}\}_{\ell \in \B})$. Since the sets $\A, \B, \C$ partition the primes up to $z$, the number of possibilities for these sets is $3^{\pi(z)} = \exp(O(\log{x}/{\log\log{x}})) = x^{o(1)}$. Having chosen these sets, the exponents $e_\ell$, for $\ell \in \B$, are determined by the prime factorization of $m$. This proves the lemma with $\frac14$ replaced by any positive $\epsilon$.
\end{proof}
	
We next investigate two sums over~$m_\sigma$ for future use in estimating error terms.

\begin{lemma} \label{msigma lemma}
For each $\sigma \in \Ss$, define $m_{\sigma}$ by \eqref{eq:msigmadef}. Then for all $x\ge3$,
\begin{equation} \label{two Rankin sums}
x^{6/5} \log\log{x} \sum_{\substack{\sigma \in \Ss \\ m_{\sigma} > x}} \frac{1}{m_{\sigma}} + x^{1/5}\log\log{x} \sum_{\substack{\sigma \in \Ss \\ m_{\sigma} \le x}} 1 \ll x^{3/4}.
\end{equation}
\end{lemma}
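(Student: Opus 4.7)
My plan is to parameterize $\Ss$ by the decomposition $m_\sigma = P_z\,k_\sigma$, where $P_z := \prod_{\ell\le z}\ell$ is the primorial and $k_\sigma := \prod_{\ell\in\B}\ell^{e_\ell}$; this reduces both sums to Rankin-style estimates for $z$-friable integers. The decomposition is immediate from~\eqref{eq:msigmadef}, since every prime $\ell\le z$ divides $m_\sigma$. Given a $z$-friable $k$, the configurations $\sigma$ with $k_\sigma=k$ are parameterized by the choices of how to split $\{\ell\le z : \ell\nmid k\}$ between $\A$ and $\C$, giving exactly $2^{\pi(z)-\omega(k)}\le 2^{\pi(z)}$ such configurations. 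By the prime number theorem, $P_z = x^{1/10+o(1)}$ and $2^{\pi(z)} = x^{o(1)}$, and these are essentially the only facts about the distribution of primes I will use.

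Both sums will then be estimated via Rankin's trick with exponent $s=1/2$. The key auxiliary bound is
\[
\prod_{\ell\le z}(1-\ell^{-1/2})^{-1}=\exp\Bigl(O\Bigl(\sum_{\ell\le z}\ell^{-1/2}\Bigr)\Bigr)=\exp\bigl(O(z^{1/2}/\log z)\bigr)=x^{o(1)},
\]
using the standard partial-summation bound $\sum_{\ell\le z}\ell^{-1/2}\ll z^{1/2}/\log z$ and $z=(\log x)/10$. For the counting sum I would use $\#\{\sigma:m_\sigma\le x\}\le 2^{\pi(z)}\Psi(x/P_z,z)\le 2^{\pi(z)}\Psi(x,z)$ together with Rankin's bound $\Psi(x,z)\le x^{1/2}\prod_{\ell\le z}(1-\ell^{-1/2})^{-1}\le x^{1/2+o(1)}$, so that $x^{1/5}\log\log x\cdot\#\{\sigma:m_\sigma\le x\}\ll x^{7/10+o(1)}\ll x^{3/4}$.

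For the tail sum, Rankin's trick in the form $\sum_{k>T,\,P(k)\le z}1/k\le T^{-1/2}\prod_{\ell\le z}(1-\ell^{-1/2})^{-1}$ applied with $T=x/P_z$, combined with the per-$k$ configuration count, yields
\[
\sum_{\sigma:m_\sigma>x}\frac{1}{m_\sigma}\le\frac{2^{\pi(z)}}{P_z}\sum_{\substack{k>x/P_z\\P(k)\le z}}\frac{1}{k}\ll\frac{x^{o(1)}}{x^{1/2}\,P_z^{1/2}}=x^{-11/20+o(1)},
\]
since $P_z^{1/2}=x^{1/20+o(1)}$; multiplying by $x^{6/5}\log\log x$ produces $x^{13/20+o(1)}\ll x^{3/4}$, as required. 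The one point I would flag as subtle is the tight numerical margin: the half-power Rankin saving alone contributes only $(x/P_z)^{-1/2}\approx x^{-9/20}$, which against the $x^{6/5}$ prefactor just barely reaches $x^{3/4+o(1)}$; it is precisely the extra factor $1/P_z\approx x^{-1/10}$ coming from the forced primorial divisibility of $m_\sigma$ that genuinely tips the final exponent below $3/4$.
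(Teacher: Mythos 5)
Your argument is correct and shares the paper's core strategy: Rankin's method together with the observation that each value of $m_\sigma$ corresponds to only $x^{o(1)}$ configurations. The execution differs in two respects. The paper does not decompose $m_\sigma = P_z k_\sigma$; instead it applies Rankin with exponents $7/8$ and $1/8$ to fold both sums into the single quantity $x^{13/40}\log\log x\sum_{\sigma\in\Ss} m_\sigma^{-1/8}$, and then invokes Lemma~\ref{lem:nearlydetermined} together with the $z$-friability of $m_\sigma$ to land at $x^{23/40+o(1)}\log\log x$. Your factorization $m_\sigma = P_z k_\sigma$ is a cleaner way to see the structure, and your per-$k$ configuration count $2^{\pi(z)-\omega(k)}$ is in fact a sharper version of the paper's $3^{\pi(z)}$ (the paper's proof of Lemma~\ref{lem:nearlydetermined} overcounts by not using that $m_\sigma$ determines $\B$); since both are $x^{o(1)}$ this makes no material difference. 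One substantive correction to your closing remark, though: the primorial saving is \emph{not} load-bearing. If one ignores $P_z\mid m_\sigma$ entirely and simply bounds $\sum_{\sigma:\,m_\sigma>x}m_\sigma^{-1}\le x^{o(1)}\sum_{m>x,\,P(m)\le z} m^{-1}\le x^{-1/2+o(1)}$ by Rankin with exponent $1/2$, the first term is already $\ll x^{6/5}\log\log x\cdot x^{-1/2+o(1)}=x^{7/10+o(1)}\ll x^{3/4}$. The apparent tightness in your version is an artifact of the accounting: relaxing the Rankin threshold from $m>x$ to $k>x/P_z$ costs a factor $P_z^{1/2}$, which the explicit $1/P_z$ prefactor then more than recovers; these two effects are bookkeeping for the same divisibility and largely cancel, rather than the $1/P_z$ being what tips the exponent below $3/4$.
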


\begin{proof}
We proceed by Rankin's method:
\begin{align*}
x^{6/5} \log\log{x} & \sum_{\substack{\sigma \in \Ss\\ m_{\sigma} > x}} \frac{1}{m_{\sigma}} + x^{1/5}\log\log{x} \sum_{\substack{\sigma \in \Ss \\ m_{\sigma} \le x}} 1 \\
&\le x^{6/5} \log\log{x} \sum_{\substack{\sigma \in \Ss \\  m_{\sigma} > x}} \bigg(\frac{m_\sigma}x\bigg)^{7/8} \frac{1}{m_{\sigma}} + x^{1/5} \log\log{x} \sum_{\substack{\sigma \in \Ss  \\ m_{\sigma} \le x}} \bigg(\frac x{m_\sigma}\bigg)^{1/8} \\
&= x^{13/40} \log\log x \sum_{\sigma \in \Ss} \frac1{m_\sigma^{1/8}}.
\end{align*}
Every value of $m_\sigma$ is $z$-friable, and there are at most $x^{1/4}$ configurations $\sigma\in\Ss$ for every possible value of $m_\sigma$ by Lemma~\ref{lem:nearlydetermined}. Therefore
\begin{align*}
x^{13/40} \log\log x \sum_{\sigma \in \Ss} \frac1{m_\sigma^{1/8}} &\ll x^{13/40} \log\log x \cdot x^{1/4} \sum_{m\ z\text{-friable}} \frac1{m^{1/8}} \\
&= x^{23/40} \log\log x \prod_{p\le z} \bigg( 1 + \frac1{p^{1/8}} + \frac1{p^{1/4}} + \cdots \bigg) \\
&= x^{23/40} \log\log x \prod_{p\le z} \bigg( 1 - \frac1{p^{1/8}}\bigg)^{-1}.
\end{align*}
Each factor in the product is at most $( 1 - 2^{-1/8})^{-1} < 13$, and so the product is less than $13^{\pi(z)} = 13^{O(\log x/\log\log x)} = x^{o(1)}$. Thus the left-hand side of equation~\eqref{two Rankin sums} is $\ll x^{23/40+o(1)} \log\log x \ll x^{3/4}$ as claimed.
\end{proof}

The next lemma relates the mean value of $K_z(N) R_z(N)$, taken over odd $N$, to the sum of $K_z(\sigma) R_z(\sigma) d_{\sigma}$, taken over all configurations $\sigma$.

\begin{lemma}\label{lem:related} For all $x\ge 3$,
	\[ \sum_{N\le x} K_z(N) R_z(N) = x \sum_{\sigma \in \Ss} K_z(\sigma) R_z(\sigma) d_{\sigma} + O(x^{3/4}). \]
\end{lemma}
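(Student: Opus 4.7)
The plan is to start from the partition~\eqref{eq:rewrite0} of the sum by configurations and insert the evaluation of the inner count provided by Lemma~\ref{lem:innersum}. The main obstacle is that the configuration space $\Ss$ has far too many elements for the error term $O(x^{1/5})$ in~\eqref{eq:lesscrude} to be summed trivially; indeed, that formula is only efficient when $m_\sigma$ is not much larger than $x$. We therefore split the outer sum over $\Ss$ according to whether $m_\sigma \leq x$ or $m_\sigma > x$, using the sharp identity in the first regime and the crude upper bound~\eqref{eq:crudeupper} in the second. The two-part estimate recorded in Lemma~\ref{msigma lemma} is tailored to control exactly these contributions.

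For $\sigma$ with $m_\sigma \leq x$, applying~\eqref{eq:lesscrude} gives
\[ \sum_{\substack{\sigma\in\Ss\\ m_\sigma \leq x}} K_z(\sigma) R_z(\sigma) \sum_{\substack{N\leq x\\ \sigma_N=\sigma}} 1 = x\sum_{\substack{\sigma\in\Ss\\ m_\sigma \leq x}} K_z(\sigma) R_z(\sigma) d_\sigma + O\bigg( x^{1/5} \sum_{\substack{\sigma\in\Ss\\ m_\sigma \leq x}} K_z(\sigma) R_z(\sigma) \bigg). \]
Since $0 \leq K_z(\sigma) \leq 1$ and Mertens's theorem yields
\[ R_z(\sigma) \leq \prod_{\ell\leq z}(1-1/\ell)^{-1} \ll \log z \ll \log\log x, \]
the displayed error is $\ll x^{1/5} \log\log x \sum_{\sigma:\, m_\sigma\leq x} 1 \ll x^{3/4}$ by the second half of~\eqref{two Rankin sums}.

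For $\sigma$ with $m_\sigma > x$, the crude bound~\eqref{eq:crudeupper} together with the definition~\eqref{eq:msigmadef} of $m_\sigma$ yields
\[ \prod_{\ell\in\B}\ell^{-e_\ell} = \frac{\prod_{\ell\leq z}\ell}{m_\sigma} \leq \frac{x^{1/5}}{m_\sigma} \]
for $x$ sufficiently large (by the prime number theorem, as already observed in the proof of Lemma~\ref{lem:innersum}). Hence the total contribution of these $\sigma$ to the left-hand side is
\[ \ll x \log\log x \sum_{\substack{\sigma\in\Ss\\ m_\sigma>x}} \prod_{\ell\in\B}\ell^{-e_\ell} \ll x^{6/5}\log\log x \sum_{\substack{\sigma\in\Ss\\ m_\sigma>x}}\frac{1}{m_\sigma} \ll x^{3/4} \]
by the first half of~\eqref{two Rankin sums}. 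To complete the main term to a sum over all $\sigma\in\Ss$, we must add back $x\sum_{\sigma:\, m_\sigma>x}K_z(\sigma)R_z(\sigma)d_\sigma$; since $d_\sigma \leq \prod_{\ell\in\B}\ell^{-e_\ell}$, this tail is bounded by the same calculation and is also $\ll x^{3/4}$. Combining the three error contributions proves the lemma.
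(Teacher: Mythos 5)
Your argument is correct and follows the same route as the paper: split over configurations according to whether $m_\sigma\le x$ or $m_\sigma>x$, apply the two estimates of Lemma~\ref{lem:innersum} in the respective ranges, bound $K_z(\sigma)R_z(\sigma)\ll\log\log x$, convert $\prod_{\ell\in\B}\ell^{-e_\ell}$ to $m_\sigma^{-1}\prod_{\ell\le z}\ell\le x^{1/5}/m_\sigma$, and invoke Lemma~\ref{msigma lemma} to dispose of all three error contributions. The only cosmetic difference is that you treat the extension of the main-term sum to all of $\Ss$ as a separate third error term, whereas the paper observes that $d_\sigma\le\prod_{\ell\in\B}\ell^{-e_\ell}$ and absorbs it into the $m_\sigma>x$ contribution; the two presentations are equivalent.
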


\begin{proof}
We begin by noting that the upper bounds
\begin{equation} \label{easy upper bounds}
0 \le K(N) \le K_z(N) \le 1 \quad\text{and}\quad 0 \le R_z(N) \le R(N) \le \prod_{p\le x} \bigg( 1-\frac1p \bigg)^{-1} \ll \log\log x
\end{equation}
are valid for all $N\le x$. We write
\begin{align*}
\sum_{N \leq x} K_z(N) R_z(N) &= \sum_{\sigma \in \Ss} K_z(\sigma) R_z(\sigma)\sum_{\substack{N \leq x \\ \sigma_N = \sigma}} 1 \\
&= \sum_{\substack{\sigma \in \Ss \\ m_\sigma \le x}} K_z(\sigma) R_z(\sigma)\sum_{\substack{N \leq x \\ \sigma_N = \sigma}} 1 + \sum_{\substack{\sigma \in \Ss  \\ m_\sigma > x}} K_z(\sigma) R_z(\sigma)\sum_{\substack{N \leq x \\ \sigma_N = \sigma}} 1 \\
&= \sum_{\substack{\sigma \in \Ss \\ m_\sigma \le x}} K_z(\sigma) R_z(\sigma) (d_{\sigma}x + O(x^{1/5})) + O\bigg( \sum_{\substack{\sigma \in \Ss  \\ m_\sigma > x}} K_z(\sigma) R_z(\sigma) x\prod_{\ell \in \B} \ell^{-e_\ell} \bigg)
\end{align*}
by Lemma~\ref{lem:innersum}. Using the upper bounds~\eqref{easy upper bounds} for $K_z$ and $R_z$, we deduce after extending the first sum to infinity that
\begin{align*}
\sum_{N \leq x} K_z(N) R_z(N) &= x \sum_{\sigma \in \Ss} K_z(\sigma) R_z(\sigma) d_{\sigma} + O\bigg( x\log\log x \sum_{\substack{\sigma \in \Ss\\ m_\sigma > x}} d_{\sigma} \bigg) \\
&\qquad{}+ O\bigg( x^{1/5} \log\log x \sum_{\substack{\sigma \in \Ss \\ m_\sigma \le x}} 1 + x \log\log x \sum_{\substack{\sigma \in \Ss \\ m_\sigma > x}} \prod_{\ell \in \B} \ell^{-e_\ell} \bigg);
\end{align*}
since the inequality $d_\sigma \le \prod_{\ell \in \B}\ell^{-e_\ell}$ follows from the definition~\eqref{eq:dsigmadef}, the first error term is dominated by the second. Because $\prod_{\ell \in \B}\ell^{-e_\ell} = m_{\sigma}^{-1} \prod_{\ell \leq z} \ell < m_{\sigma}^{-1} x^{1/5}$ once $x$ is large, this error term is $\ll x^{3/4}$ by Lemma~\ref{msigma lemma}, and the proof is complete.
\end{proof}

In view of Lemma \ref{lem:related}, Proposition \ref{prop:oddavg} is a consequence of the following remarkable identity:
\begin{lemma}\label{lem:identity} We have
\[ \sum_{\sigma \in \Ss} K_z(\sigma) R_z(\sigma) d_{\sigma} =1. \]
\end{lemma}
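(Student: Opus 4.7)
The plan is to recognize the sum as an Euler product. Inspecting the definitions of $K_{z}(\sigma)$, $R_{z}(\sigma)$, and $d_\sigma$, each is a product indexed by the primes $\ell\leq z$, and each factor depends only on the local data at $\ell$: namely, which of $\A, \B, \C$ contains $\ell$, together with the exponent $e_\ell$ if $\ell \in \B$. Since a configuration $\sigma \in \Ss$ is built by making these choices independently at each $\ell\leq z$, the summation over $\Ss$ decomposes into an Euler product
\[
\sum_{\sigma \in \Ss} K_{z}(\sigma)R_{z}(\sigma)d_\sigma = \prod_{\ell \leq z} L(\ell),
\]
where $L(\ell)$ is the local factor obtained by summing the $\ell$-component of $K_z R_z d_\sigma$ over the three mutually exclusive possibilities $\ell \in \A$, $\ell \in \C$, and $\ell \in \B$ (the last being a sum over $e \geq 1$). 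So the lemma reduces to proving the local identity $L(\ell) = 1$ for every prime $\ell$.

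Next I would compute each of the three summands of $L(\ell)$. For $\ell \in \A$, the Kronecker symbol factor $\leg{N-1}{\ell}^{2}$ equals $1$ (since $\ell \nmid N-1$), and a short simplification gives contribution $(\ell-2)^{2}/(\ell-1)^{2}$. For $\ell \in \C$, the Kronecker symbol factor vanishes, and the contribution simplifies to $(\ell^{2}-\ell-1)/\bigl((\ell-1)^{2}(\ell+1)\bigr)$. The case $\ell \in \B$ is the only one that requires a sum: the $e$-th term is
\[
\Bigl(1-\frac{1}{\ell^{e}(\ell-1)}\Bigr)\cdot\frac{\ell}{\ell-1}\cdot\frac{\ell-1}{\ell^{e+1}} = \frac{1}{\ell^{e}}-\frac{1}{\ell^{2e}(\ell-1)},
\]
which telescopes into two geometric series summing to $(\ell^{2}-2)/\bigl((\ell-1)^{2}(\ell+1)\bigr)$.

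Finally, placing the three contributions over the common denominator $(\ell-1)^{2}(\ell+1)$, I would check that the numerator simplifies to $(\ell-2)^{2}(\ell+1)+(\ell^{2}-\ell-1)+(\ell^{2}-2) = \ell^{3}-\ell^{2}-\ell+1 = (\ell-1)^{2}(\ell+1)$, whence $L(\ell) = 1$. Substituting back into the Euler product yields $\sum_{\sigma \in \Ss} K_z(\sigma) R_z(\sigma) d_\sigma = 1$, as desired.

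The nontrivial part is the algebraic identity $L(\ell)=1$; it is not at all apparent a priori from the definitions of $K$, $R$, and $d_\sigma$ why this should hold, and the cancellation is the reason that one obtains the clean main term $x$ in Theorem~\ref{N odd average for K} rather than some awkward constant times $x$. The independence of local data and the resulting Euler-product factorization are routine; the telescoping of the geometric series in the $\B$-configuration is mechanical; so the entire argument hinges on that single polynomial identity.
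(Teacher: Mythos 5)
Your proposal is correct and follows essentially the same approach as the paper: factor $K_z(\sigma)R_z(\sigma)d_\sigma$ into local pieces at each prime $\ell\le z$, exploit the independence of the local choices to rewrite the sum over $\Ss$ as a product of local sums (the paper does this in two stages, summing first over $\{e_\ell\}$ and then over partitions $(\A,\B,\C)$, but the effect is identical), and verify the local identity $P_\A(\ell)+P_\B(\ell)+P_\C(\ell)=1$ by direct algebra. All of your intermediate computations agree with equations \eqref{eq:bigmess}--\eqref{eq:nearfinal} in the paper's proof.
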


\begin{proof} Referring back to the definitions of $K_z$ and $R_z$, we see that for $\sigma \in \Ss$,
\begin{multline}\label{eq:convenienteulerfactorization} K_z(\sigma) R_z(\sigma) =\left(\prod_{\ell \in \A}\left(1 - \frac{1}{(\ell-1)^2}\right)\right) \left(\prod_{\ell \in \B}\left(1-\frac{1}{\ell^{e_\ell}(\ell-1)}\right)\left(1-\frac{1}{\ell}\right)^{-1}\right) \times \\
\left(
\prod_{\ell \in \C}\left(1-\frac{1}{(\ell-1)^2(\ell+1)}\right)\right).
\end{multline}
Multiplying by the expression  \eqref{eq:dsigmadef} for $d_{\sigma}$, we find that
\begin{equation}\label{eq:bigmess} K_z(\sigma) R_z(\sigma) d_{\sigma} = \left(\prod_{\ell \in \A}\frac{\ell-2}{\ell-1}\right)^2 \left(\prod_{\ell \in \B} \frac{1}{\ell^{e_\ell}}\left(1-\frac{1}{\ell^{e_\ell}(\ell-1)}\right)\right) \left(\prod_{\ell \in \C} \frac{\ell^2-\ell-1}{(\ell-1)^2 (\ell+1)}\right).
\end{equation}
Recall that $\sigma$ is a $4$-tuple with entries $\A, \B, \C$, and $\{e_\ell\}_{\ell \in \B}$. 
We sum the expression \eqref{eq:bigmess} over the possibilities for $\{e_\ell\}$. We have
\[ \sum_{\substack{\{e_{\ell}\} \\ \text{each } e_\ell \ge 1}} \bigg(\prod_{\ell \in \B}
\frac{1}{\ell^{e_\ell}}\bigg(1-\frac{1}{\ell^{e_\ell}(\ell-1)}\bigg)\bigg) =
\prod_{\ell \in \B} \bigg(\sum_{e_\ell=1}^{\infty}\frac{1}{\ell^{e_{\ell}}}\bigg(1-\frac{1}{\ell^{e_\ell}(\ell-1)}\bigg)\bigg).
 \]
By a short computation,
\[ \sum_{e_\ell=1}^{\infty}\frac{1}{\ell^{e_{\ell}}}\left(1-\frac{1}{\ell^{e_\ell}(\ell-1)}\right) = \frac{\ell^2-2}{(\ell+1)(\ell-1)^2}. \]
Thus, if we now fix only $\A$, $\B$, and $\C$ and sum over all corresponding configurations $\sigma$, we have
\begin{align}
\sum_{\substack{\sigma\in\Ss \\ \A,\B,\C \text{ fixed}}} K_z(\sigma) R_z(\sigma) d_\sigma &= \bigg(\prod_{\ell \in \A}\frac{\ell-2}{\ell-1}\bigg)^2 \bigg(\prod_{\ell \in \B} \frac{\ell^2-2}{(\ell+1)(\ell-1)^2}\bigg) \bigg(\prod_{\ell \in \C} \frac{\ell^2-\ell-1}{(\ell-1)^2 (\ell+1)}\bigg) \notag \\
&= \bigg(\prod_{\ell \in \A} P_\A(\ell)\bigg) \bigg(\prod_{\ell \in \B}P_\B(\ell)\bigg) \bigg(\prod_{\ell \in \C}P_\C(\ell)\bigg), \label{eq:nearfinal}
\end{align}
where for notational convenience we have defined
\begin{equation} \label{early P defs}
 P_\A(\ell) = \bigg(\frac{\ell-2}{\ell-1}\bigg)^2, \quad P_\B(\ell)= \frac{\ell^2-2}{(\ell+1)(\ell-1)^2}, \quad P_\C(\ell) =  \frac{\ell^2-\ell-1}{(\ell-1)^2 (\ell+1)}.
 \end{equation}
To finish the proof, we sum the right-hand side of equation~\eqref{eq:nearfinal} over all possibilities for $\A$, $\B$, and~$\C$. The only condition on the sets $\A$, $\B$, and $\C$ is that they partition the set of primes not exceeding $z$. Hence,
\begin{align*}
\sum_{\sigma \in \Ss} K_z(\sigma) R_z(\sigma) d_{\sigma} &= \sum_{\substack{\A, \B, \C \text{ disjoint} \\ \A\cup\B\cup\C = \{\ell\le z\}}} \bigg(\prod_{\ell \in \A} P_\A(\ell)\bigg) \bigg(\prod_{\ell \in \B}P_\B(\ell)\bigg) \bigg(\prod_{\ell \in \C}P_\C(\ell)\bigg) \\
&= \prod_{\ell \leq z}\big(P_\A(\ell) + P_\B(\ell) + P_\C(\ell)\big).
\end{align*}
However, $P_\A(\ell) + P_\B(\ell)+ P_\C(\ell)=1$, identically! This completes the proof of the lemma, and so also of Proposition~\ref{prop:oddavg}.\end{proof}

As already remarked above, the first half of Theorem \ref{N odd average for K} follows immediately upon combining  Lemmas~\ref{lem:related} and~\ref{lem:identity}.

\begin{proof}[Proof of the second half of Theorem \ref{N odd average for K}] The condition that $N$ is odd amounts to the requirement that $2 \in \C$ in the configuration notation of this section. If we carry this requirement through the proofs of Lemmas~\ref{lem:related} and~\ref{lem:identity}, the bulk of the argument is essentially unchanged, but the new conclusions are that
\[
\sum_{\substack{N\le x \\ 2\nmid N}} K_z(N) R_z(N) = x \sum_{\substack{\sigma \in \Ss \\ 2\in\C}} K_z(\sigma) R_z(\sigma) d_{\sigma} + O(x^{3/4})
\]
and
\begin{align*}
\sum_{\substack{\sigma \in \Ss\\ 2 \in \C}} K_z(\sigma) R_z(\sigma) d_{\sigma} &= \sum_{\substack{\A, \B, \C \text{ disjoint} \\ \A\cup\B\cup\C = \{\ell\le z\} \\ 2 \in \C}} \bigg(\prod_{\ell \in \A} P_\A(\ell)\bigg) \bigg(\prod_{\ell \in \B}P_\B(\ell)\bigg) \bigg(\prod_{\ell \in \C}P_\C(\ell)\bigg) \\
&= P_C(2) \prod_{2< \ell \leq z}\big(P_\A(\ell) + P_\B(\ell) + P_\C(\ell)\big) = P_C(2).
\end{align*}
(We assume in going from the first line to the second that $z\geq 2$, i.e., that $x \geq e^{20}$.)
Since $P_C(2)=\frac{1}{3}$, the second half of Theorem \ref{N odd average for K} follows.
\end{proof}

Most mathematical coincidences have explanations, of course, and the magical-seeming $P_\A(\ell) + P_\B(\ell)+ P_\C(\ell)=1$ is no different. One might guess that $P_\A(\ell)$, $P_\B(\ell)$, and $P_\C(\ell)$ are probabilities of certain events occurring, and this is exactly right: as $\gamma$ ranges over all elements of ${\rm GL}_2(\F_\ell)$, the expression $\det(\gamma) + 1 - \mathop{\rm tr}(\gamma)$ is congruent to $0\mod\ell$ with probability $P_\B(\ell)$, congruent to $1\mod\ell$ with probability $P_\C(\ell)$, and congruent to each of the $\ell-2$ other residue classes with probability $P_\A(\ell)/(\ell-2)$. (See \cite[equation (2.2)]{DW} for this computation, as well as for the precise connection to elliptic curves.)

We conclude this section by saying a few words about the function that was originally published in~\cite{DS}, which we will here call $K^\circ$ to avoid confusion with the corrected function $K^*$:
\begin{multline*}
K^\circ(N) = \\
 \frac N{\phi(N)} \prod_{p\nmid N} \bigg( 1 - \frac{\leg{N-1}{p}^2p + 1}{(p-1)^2(p+1)} \bigg) \prod_{\substack{p\mid N \\ 2\nmid \nu_p(N)}} \bigg( 1 - \frac1{p^{\nu_p(N)}(p-1)} \bigg) \prod_{\substack{p\mid N \\ 2\mid \nu_p(N)}} \bigg( 1 - \frac{p-\leg{-N_p}p}{p^{\nu_p(N)+1}(p-1)} \bigg),
\end{multline*}
where $N_p = N/p^{\nu_p(N)}$ is the $p$-free part of~$N$. This function is even further from being a multiplicative function than $K^*$, since its value can depend even on the residue class modulo $p$ of the $p$-free part of~$N$. Nevertheless, our techniques can in fact determine the average value of the function $K^\circ$ as well.

To investigate the average of $K^\circ$, we would expand the notion of a configuration to a sextuple $(\A,\B_1,\B_2,\C,\{e_\ell\}_{\ell\in\B_1\cup\B_2},\{a_\ell\}_{\ell\in\B_2})$, where $\A,\B_1,\B_2,\C$ partition the set of primes up to $z$, the $e_\ell$ are positive integers, and the $a_\ell$ are integers satisfying $1\le a_\ell\le \ell-1$. We would modify Definition~\ref{def:correspondingconfig} by setting $\B_1:=\{\ell \leq z: 2\nmid e_\ell\}$ and $\B_2:=\{\ell \leq z: 2\mid e_\ell\}$ and, for $\ell\in\B_2$, choosing $a_\ell\in\{1,\dots,\ell-1\}$ so that $a_\ell \equiv N/\ell^{e_\ell} \mod \ell$. The analogue of equation~\eqref{eq:bigmess} would be
\begin{multline*}
K^\circ_z(\sigma) d_{\sigma} = \left(\prod_{\ell \in \A}\frac{\ell-2}{\ell-1}\right)^2  \left(\prod_{\ell \in \C} \frac{\ell^2-\ell-1}{(\ell-1)^2 (\ell+1)}\right) \times \\
\left(\prod_{\ell \in \B_1} \frac{1}{\ell^{e_\ell}}\left(1-\frac{1}{\ell^{e_\ell}(\ell-1)}\right)\right) \left(\prod_{\ell \in \B_2} \frac{1}{\ell^{e_\ell}(\ell-1)}\left(1-\frac{\ell-\leg{-a_\ell}\ell}{\ell^{e_\ell+1}(\ell-1)}\right)\right).
\end{multline*}
We would then hold $\A,\B_1,\B_2,\C$, and the $e_\ell$ fixed and sum over all $\prod_{\ell\in\B_2} (\ell-1)$ possibilities for the $a_\ell$; this has the effect of replacing the Legendre symbol $\leg{-a_\ell}\ell$ by its average value~$0$. At this point in the argument, the factors corresponding to primes in $\B_1$ and $\B_2$ would be identical, and the calculation would soon dovetail with equation~\eqref{eq:nearfinal}.

We felt these few details of the determination of the average value of $K^\circ$ were worth mentioning, as an example of the wider applicability of our method and the more complicated configuration spaces that can be used.

\section{The average of $K^*$ over primes} \label{prime section}

In this section we establish Theorem~\ref{N prime average for K}. The main component of the proof is the following asymptotic formula for the sum of the multiplicative function $F$ evaluated on shifted primes.

\begin{prop} \label{prop:F average}
Let $F$ be the multiplicative function defined in equation~\eqref{F def}, and let $J$ be the constant defined in equation~\eqref{J def}. For any $x>2$ and for any positive real number $A$,
\[ \sum_{p \leq x} F(p-1)  = J \pi(x) + O_A(x/(\log{x})^A). \]
\end{prop}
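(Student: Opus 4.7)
The plan is to apply Möbius inversion to the multiplicative function $F$, swap the order of summation in $\sum_{p\le x} F(p-1)$, and then use Siegel--Walfisz for small moduli together with the rapid decay of the convolving function for large moduli.

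I would begin by writing $F = \mathbf{1} * f$, so that $f = \mu * F$ is multiplicative. From \eqref{F def}, the value of $F(n)$ depends only on the squarefree kernel of $n$ (in particular $F(p^k)=F(p)$ for all $k\ge 1$), so $f$ is supported on squarefree integers, with $f(d) = \prod_{p\mid d}(F(p)-1)$. A short algebraic simplification gives $f(2) = -\tfrac13$ and, for $p>2$,
\[
f(p) = F(p) - 1 = \frac{1}{(p-2)(p+1)}.
\]
In particular $|f(p)|/\phi(p) = |f(p)|/(p-1) \ll 1/p^{3}$, so the series $\sum_{d\ge 1}|f(d)|/\phi(d)$ converges absolutely with very rapidly decaying tails.

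Exchanging orders of summation,
\[
\sum_{p\le x} F(p-1) = \sum_{d\ge 1} f(d)\,\pi(x;d,1),
\]
where $\pi(x;d,1) = \#\{p\le x : p\equiv 1\pmod d\}$. I would split at $D = (\log x)^{B}$ for some $B = B(A)$ to be chosen. For $d \le D$, the Siegel--Walfisz theorem gives $\pi(x;d,1) = \pi(x)/\phi(d) + O_{B}(x\exp(-c\sqrt{\log x}))$ uniformly in $d$. Summing against $f(d)$ and extending to all $d$ (at cost controlled by the tail bound on $\sum|f(d)|/\phi(d)$), the main term becomes
\[
M\,\pi(x), \qquad M \;=\; \sum_{d\ge 1} \frac{f(d)}{\phi(d)} \;=\; \prod_{p}\bigg(1 + \frac{f(p)}{p-1}\bigg),
\]
while the error from Siegel--Walfisz, summed over $d\le D$, is $O\!\left(D\cdot x\exp(-c\sqrt{\log x})\right)$, which is absorbed into $O_{A}(x/(\log x)^{A})$. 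The Euler product $M$ is evaluated using the explicit formulas for $f(p)$: recognizing the factor at each $p>2$ as $1 + \frac{1}{(p-2)(p-1)(p+1)}$ identifies the answer with the constant $J$ of \eqref{J def} (the factor at $p=2$ being responsible for the residual rational constant).

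For the tail $d > D$, I would invoke the Brun--Titchmarsh inequality $\pi(x;d,1) \ll x/(\phi(d)\log(2x/d))$ for $d < x$ (with $\pi(x;d,1)=0$ for $d\ge x$), which gives a contribution
\[
\ll \frac{x}{\log x}\sum_{d > D}\frac{|f(d)|}{\phi(d)},
\]
and the rapid decay of $|f(d)|/\phi(d)$ makes this $O(x/(\log x)^{A+1})$ as soon as $B$ is chosen large enough (any $B\ge A$ works comfortably). The main obstacle is the explicit evaluation and matching of the Euler product $M$, which requires a careful algebraic manipulation at each prime; the analytic framework—Siegel--Walfisz at small moduli, Brun--Titchmarsh at large ones, glued together by the rapid decay of $|f|$—is otherwise essentially routine for problems of this shape.
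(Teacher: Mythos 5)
Your approach matches the paper's proof essentially step for step: write $F = \mathbf{1}*f$ with $f$ multiplicative and supported on squarefree integers, compute $f(2) = -\tfrac13$ and $f(p) = \tfrac1{(p-2)(p+1)}$ for odd $p$, swap the orders of summation, split the moduli at a power of $\log x$, and apply Siegel--Walfisz to the small moduli. The only genuine difference is the tail over large $d$: you invoke Brun--Titchmarsh, whereas the paper uses the trivial bound $\pi(x;d,1) \le x/d$ together with $f(d) \ll 1/(d\phi(d)) \ll d^{-3/2}$; the latter is simpler and already suffices given the rapid decay of $f$ (and Brun--Titchmarsh as you quote it degrades when $d$ is near $x$, where you would want the trivial bound anyway). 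One point to nail down: the Euler product you obtain is $\bigl(1 + f(2)\bigr)\prod_{p>2}\bigl(1+\tfrac{1}{(p-2)(p-1)(p+1)}\bigr) = \tfrac{2}{3}J$, not $J$; you allude to the $p=2$ factor as ``the residual rational constant'' but never state its value. This $\tfrac{2}{3}J$ agrees with what the paper's own proof computes, and it is the constant needed when the proposition feeds into Theorem~\ref{N prime average for K}, so the bare $J$ in the proposition's displayed statement appears to be a typographical slip rather than something your argument should be trying to match.
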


\begin{proof}
Write $F(n) = \sum_{d \mid n} g(d)$ for an auxiliary function $g$ (not the same function as in the proof of Theorem~\ref{N odd average for K}), which is also multiplicative. By a direct computation with the M\"{o}bius inversion formula, $g$ vanishes unless $d$ is squarefree. Moreover, $g(2)=-\frac{1}{3}$, while for odd primes $\ell$,
\begin{equation}\label{eq:gvalue} g(\ell)=\frac{1}{(\ell-2)(\ell+1)}. \end{equation}
 Writing $\pi(x;d,1)$ for the number of primes $p \leq x$ with $p \equiv 1\mod{d}$, we have
\begin{align}\notag \sum_{p \leq x} F(p-1) &= \sum_{p \leq x} \sum_{d \mid p-1} g(d) \\
	&= \sum_{d \leq (\log{x})^A} g(d) \pi(x;d,1) + \sum_{(\log{x})^A < d \leq x} g(d) \pi(x;d,1).
\label{eq:fpfirst}\end{align}
We first consider the second sum on the right-hand side. Trivially, $\pi(x;d,1) < x/d$, and so
\begin{equation} \label{eq:trivial in AP}
\left|\sum_{(\log{x})^A < d \leq x} g(d) \pi(x;d,1)\right| \leq x \sum_{d > (\log{x})^A} \frac{|g(d)|}{d}.
\end{equation}
When $g(d)$ is nonvanishing, the formula~\eqref{eq:gvalue} yields
\[
d^2 g(d) \ll \prod_{\substack{\ell \mid d,~\ell > 2}} \frac{\ell^2}{\ell^2-\ell-2} \ll \prod_{\ell \mid d}\left(1-\frac{1}{\ell}\right)^{-1} = \frac{d}{\phi(d)},
\]
and hence $g(d) \ll {1/d\phi(d)}$ for all values of $d$. In particular, using the crude lower bound $\phi(d) \gg d^{1/2}$ (compare with the precise \cite[Theorem~2.9, page 55]{MV}), we find that
$g(d) \ll d^{-3/2}$. Thus, equation~\eqref{eq:trivial in AP} gives
\[
\sum_{(\log{x})^A < d \leq x} g(d) \pi(x;d,1)  \ll x\sum_{d > (\log{x})^A} d^{-5/2} \ll x(\log{x})^{-3A/2},
\]
and so equation~\eqref{eq:fpfirst} becomes
\begin{equation} \label{eq:one sum down}
\sum_{p \leq x} F(p-1) = \sum_{d \leq (\log{x})^A} g(d) \pi(x;d,1) + O\big( x(\log{x})^{-3A/2} \big).
\end{equation}

To deal with the remaining sum, we invoke the Siegel--Walfisz theorem \cite[Corollary~11.21, page 381]{MV}. That theorem implies that for a certain absolute constant $c > 0$,
\begin{align}
\notag  \sum_{d \le (\log{x})^A} g(d) \pi(x;d,1) &= \sum_{d \leq (\log{x})^A} g(d)\bigg(\frac{\pi(x)}{\phi(d)} + O_A\big(x \exp(-c\sqrt{\log{x}})\big)\bigg) \\
&=\pi(x)\sum_{d \le (\log{x})^A}\frac{g(d)}{\phi(d)} + O_A\bigg(x \exp(-c\sqrt{\log{x}}) \sum_{d=1}^\infty |g(d)| \bigg) \notag \\
&=\pi(x)\sum_{d=1}^\infty \frac{g(d)}{\phi(d)} \notag \\
&\qquad{}+ O_A\bigg(\pi(x)\sum_{d > (\log{x})^A}\frac{|g(d)|}{\phi(d)} + x \exp(-c\sqrt{\log{x}}) \sum_{d=1}^\infty |g(d)| \bigg). \notag
\end{align}
In the error term, we again use the crude bounds $g(d) \ll d^{-3/2}$ and $\phi(d) \gg d^{1/2}$, obtaining
\begin{equation*}
\sum_{d \le (\log{x})^A} g(d) \pi(x;d,1) = \pi(x)\sum_{d=1}^\infty \frac{g(d)}{\phi(d)} + O_A\big( \pi(x) (\log x)^{-A} + x \exp(-c\sqrt{\log{x}}) \cdot 1 \big),
\end{equation*}
whereupon equation~\eqref{eq:one sum down} becomes
\begin{equation*}
\sum_{p \leq x} F(p-1) = \pi(x)\sum_{d=1}^\infty \frac{g(d)}{\phi(d)} + O_A\big( x(\log x)^{-A} \big).
\end{equation*}
Finally, the constant in this main term is an absolutely convergent sum of a multiplicative function, and hence it can be expressed as the Euler product
\begin{align*}
\sum_{d=1}^{\infty}\frac{g(d)}{\phi(d)} &= \prod_\ell \bigg( 1 + \frac{g(p)}{\phi(p)} + \frac{g(p^2)}{\phi(p^2)} + \cdots \bigg) \\&= \frac{2}{3}\prod_{\ell>2} \left(1 + \frac{1}{(\ell-1)(\ell-2)(\ell+1)}\right) = \frac{2}{3}J,
\end{align*}
by equation~\eqref{eq:gvalue}. This completes the proof of the proposition.
\end{proof}

\begin{proof}[Proof of Theorem \ref{N prime average for K}]
We first claim that the asymptotic formula~\eqref{K or Kstar} for $K^*$ follows easily from the same asymptotic formula for~$K$. Indeed, for each prime $p$, we have $K^{*}(p)= K(p){p/(p-1)} = K(p) + O(K(p)/p)$. Because each local factor in Definition~\ref{K definition} is of the form $1+O(p^{-2})$, we see that $K$ is absolutely bounded. Thus
\[
\sum_{p \leq x} K^{*}(p) = \sum_{p \leq x}K(p) + O\bigg( \sum_{p\le x} \frac1p \bigg) = \sum_{p \leq x}K(p) + O(\log\log{x}),
\]
and so it suffices to establish the asymptotic formula~\eqref{K or Kstar} for $K$.

For each odd prime $p$, the decomposition~\eqref{K decomposition} gives $K(p) = C_2 F(p-1) G(p)$, where $F$ and $G$ are defined in equations~\eqref{F def} and~\eqref{G def}, respectively. Again, all local factors in these definitions are of the form $1+O(p^{-2})$; hence $G(p) = 1+O(1/p^2)$ and $F$ is absolutely bounded. Therefore,
\begin{align*}\sum_{p \le x} K(p) &= \sum_{p \le x} C_2 F(p-1)G(p) \\
&= C_2 \sum_{p \le x} F(p-1) + O\bigg( 1 + \sum_{p \le x} \frac{F(p-1)}{p^2} \bigg) \\
&= C_2 \sum_{p \le x} F(p-1) + O(1),
\end{align*}
and so the desired asymptotic formula~\eqref{K or Kstar} is a direct consequence of Proposition~\ref{prop:F average}.
\end{proof}

\section{The distribution function of $K^*$} \label{distribution section}

The goal of this section is to establish the existence of the distribution function of $K^*(N)$. We do so by bounding the moments of $K^*(N)$:
\begin{equation} \label{muk def}
\mu_k:= \lim_{x\to\infty} \frac{1}{x}\sum_{N \le x} K^{*}(N)^k.
\end{equation}
We describe below how Theorem~\ref{distribution function theorem} follows from Proposition~\ref{prop:moments bounded}.
Before we can bound these moments, however, we must prove that the moments even exist. In Theorem~\ref{N odd average for K} we determined that $\mu_1 = 1$, and the same method of determining $\mu_k$ applies in general.

\begin{prop}\label{prop:moments exist}
For every natural number $k$, the limit~\eqref{muk def} defining $\mu_k$ exists.
\end{prop}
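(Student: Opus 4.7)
The plan is to mimic the proof of Theorem~\ref{N odd average for K}, systematically replacing $K_z(N)R_z(N)$ by its $k$-th power. With $z := \tfrac{1}{10}\log x$ and the configuration space $\Ss$ from Section~\ref{odd section}, the argument has three stages: a truncation estimate, the configuration decomposition of Lemma~\ref{lem:related}, and a convergence check for the resulting Euler product.

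For the truncation, I would combine the elementary inequality $|a^k-b^k| \leq k\max(a,b)^{k-1}|a-b|$ (applied to $a=K(N)R(N)$, $b=K_z(N)R_z(N)$, both bounded by $R(N)$) with the decomposition already used in the proof of Theorem~\ref{N odd average for K}, reducing the total error to a constant multiple of
\[
\sum_{N\leq x} R(N)^{k-1}\bigl(R(N) - R_z(N)\bigr) + \sum_{N\leq x} R(N)^k\bigl(K_z(N) - K(N)\bigr).
\]
The first sum is handled by the pointwise bound $R(N) \ll \log\log x$ together with the estimate $\sum_{N\leq x}(R(N)-R_z(N)) \ll x/z$ proved in Section~\ref{odd section}; the second follows from $K_z(N)-K(N) \ll 1/z$ together with the routine fact that $\sum_{N\leq x} R(N)^k \ll_k x$, proved by writing $R^k = \mathbf{1}*g_k$ for a multiplicative $g_k$ supported on squarefrees with $g_k(\ell) \ll_k 1/\ell$, in exact parallel with~\eqref{eq:onaverage}. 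The total truncation error is thus $O_k\bigl(x(\log\log x)^{k-1}/\log x\bigr) = o_k(x)$.

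The configuration decomposition then goes through verbatim: the extra factor $R_z(\sigma)^k \ll (\log\log x)^k$ in place of $R_z(\sigma) \ll \log\log x$ only weakens the error analysis using Lemma~\ref{msigma lemma} by a harmless $(\log\log x)^{k-1}$, giving
\[
\frac{1}{x}\sum_{N\leq x} K_z(N)^k R_z(N)^k = \sum_{\sigma\in\Ss} K_z(\sigma)^k R_z(\sigma)^k d_\sigma + o_k(1).
\]
Raising the Euler factorization~\eqref{eq:convenienteulerfactorization} to the $k$-th power and multiplying by~\eqref{eq:dsigmadef} exhibits $K_z(\sigma)^k R_z(\sigma)^k d_\sigma$ as a product of local factors indexed by the partition data, and summing over $\sigma$ telescopes to $\prod_{\ell\leq z} T_k(\ell)$, where $T_k(\ell)$ is the natural $k$-th-power generalization of the identity $P_\A(\ell)+P_\B(\ell)+P_\C(\ell) = 1$ from Lemma~\ref{lem:identity}: each local $K_z R_z$-factor is raised to the $k$-th power before the three contributions (over $\ell \in \A$, over $\ell\in\B$ with $e_\ell \geq 1$, and over $\ell \in \C$) are added.

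A routine Taylor expansion now shows $T_k(\ell) = 1 + O_k(1/\ell^2)$, since each local factor is $1+O(1/\ell^2)$ and the densities $1-2/\ell,\,1/\ell,\,1/\ell$ still sum to $1$. Hence $\prod_\ell T_k(\ell)$ converges absolutely to some $\mu_k$, and because $z = \tfrac{1}{10}\log x \to \infty$ as $x\to\infty$ we conclude that $\lim_{x\to\infty}\frac{1}{x}\sum_{N\leq x}K^*(N)^k = \mu_k$, as required (and consistently $\mu_1 = 1$ via Lemma~\ref{lem:identity}). The only real obstacle is bookkeeping---tracking the $k$-dependence through each estimate and verifying that the extra factors of $R(N)^{k-1}$ introduced by passing to $k$-th powers are always absorbed by the savings from the truncation step and by the uniform bound $\sum_{N\leq x} R(N)^k \ll_k x$; once this is accepted, no genuinely new difficulty arises beyond what was already handled in Section~\ref{odd section}.
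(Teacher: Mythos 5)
Your proposal is correct and follows essentially the same route as the paper: truncation via the inequality $|a^k-b^k|\ll_k \max(a,b)^{k-1}|a-b|$, reuse of the configuration decomposition with $K_z R_z$ replaced by its $k$-th power, and verification that the local factors sum to $1+O_k(1/\ell^2)$ so the Euler product converges. The only (inessential) deviation is that in the truncation step you invoke $\sum_{N\le x}R(N)^k \ll_k x$, whereas the paper just uses the cruder pointwise bound $R(N)\ll\log\log x$ together with estimate~\eqref{glitch fixer}; both yield the same $O_k\bigl(x(\log\log x)^{k-1}/\log x\bigr)$ error.
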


\begin{proof}
Following the proof of
Proposition \ref{prop:oddavg}, we obtain (with minimal changes to the argument) that for each fixed $k$,
\begin{equation}\label{eq:genk} \sum_{\substack{N \le x}} (K_z(N) R_z(N))^k = x \sum_{\sigma \in \Ss} K_z(\sigma)^k R_z(\sigma)^k d_{\sigma} + O_k(x^{3/4}), \end{equation}
where $z = \frac{1}{10}\log{x}$ and $d_{\sigma}$ is defined in equation~\eqref{eq:dsigmadef}. Note that for $N \leq x$, \begin{align*}
\big( K_z(N) R_z(N))^k & {}- (K(N)R(N) \big)^k \\
&\ll_k  \max\big\{ K(N)R(N), K_z(N) R_z(N) \big\}^{k-1} \cdot \big| K(N)R(N)-K_z(N) R_z(N) \big|  \\
&\ll_k (\log\log{x})^{k-1} \cdot \big| K(N)R(N) - K_z(N) R_z(N) \big|
\end{align*}
by the bounds in equation~\eqref{easy upper bounds}; therefore
\begin{align*}
\sum_{N \le x} K^*(N)^k &= \sum_{N \le x} (K_z(N) R_z(N))^k + \bigg( \sum_{N \le x} \big( (K(N) R(N))^k - (K_z(N)R_z(N))^k \big) \bigg) \\
&= \sum_{N \le x} (K_z(N) R_z(N))^k + O_k\bigg( (\log\log{x})^{k-1} \sum_{N \le x} \big| K(N)R(N) - K_z(N) R_z(N) \big| \bigg).
\end{align*}
Using equation~\eqref{eq:genk} in the main term and the estimate~\eqref{glitch fixer} in the error term, we obtain
\begin{align*}
\sum_{N \le x} K^*(N)^k &= x \sum_{\sigma \in \Ss} K_z(\sigma)^k R_z(\sigma)^k d_{\sigma} + O_k(x^{3/4} + (\log\log{x})^{k-1}x/z) \\
&= x \sum_{\sigma \in \Ss} K_z(\sigma)^k R_z(\sigma)^k d_{\sigma} + O_k\bigg(\frac{x}{\log{x}}(\log\log{x})^{k-1}\bigg).
\end{align*}
Dividing both sides by $x$ and passing to the limit, we deduce that
\begin{equation}\label{eq:sumoversigma} \mu_k = \lim_{x\to\infty} \sum_{\sigma \in \Ss}  K_z(\sigma)^k R_z(\sigma)^k d_{\sigma}, \end{equation}
provided that this limit exists.

To compute the sum over $\sigma$ in \eqref{eq:sumoversigma}, we follow the proof of Lemma \ref{lem:identity}; however, the details are somewhat messier. With the four components $\A$, $\B$, $\C$, $\{e_{\ell}\}_{\ell \in \B}$ of $\sigma$ as before, we write down the expansion for $K_z(\sigma)^k R_z(\sigma)^k d_{\sigma}$ analogous to \eqref{eq:bigmess}. This expansion is made up of three pieces, which are products over primes $\ell$ in $\A$, $\B$, and $\C$. The $\B$ product depends additionally on the tuple $\{e_{\ell}\}_{\ell \in \B}$. We sum over all possibilities for $\{e_\ell\}_{\ell \in \B}$ to remove this dependence. After straightforward but uninspiring computations, we find that fixing only $\A$, $\B$, and~$\C$,
\[ \sum_{\sigma} K_z(\sigma)^k R_z(\sigma)^k d_{\sigma} = \left(\prod_{\ell \in \A} P_{\A}(\ell)\right) \left(\prod_{\ell \in \B}P_{\B}(\ell)\right) \left(\prod_{\ell \in \C} P_{\C}(\ell)\right),  \]
where (we suppress the dependence on $k$ in the notation on the left-hand sides)
\begin{align}\notag
	P_{\A}(\ell) &= (1-\tfrac2\ell)^{k+1} (1-\tfrac1\ell)^{-2k},\\
	P_{\B}(\ell) &= \left(1-\frac{1}{\ell}\right)^{1-k} \sum_{d=1}^\infty \frac{1}{\ell^{d}}\left(1-\frac{1}{\ell^{d}(\ell-1)}\right)^k,
	\label{eq:pdefs}\\
	P_{\C}(\ell)&= \frac{1}{\ell}
	\left(1-\frac{1}{(\ell-1)^2 (\ell+1)}\right)^k.\notag
\end{align}
(Note that when $k=1$, these expressions reduce to the expressions in equation~\eqref{early P defs}.)
To compute the sum appearing in \eqref{eq:sumoversigma}, we sum over $\A$, $\B$, and $\C$, keeping in mind that these sets partition the primes in $[2,z]$. We find that
\[ \sum_{\sigma \in \Ss} K_z(\sigma)^k R_z(\sigma)^k d_{\sigma} = \prod_{\ell \leq z} \left(P_{\A}(\ell) + P_{\B}(\ell) + P_{\C}(\ell)\right), \]
and so from equation~\eqref{eq:sumoversigma},
\begin{equation}\label{eq:mukprod} \mu_k = \prod_{\ell}\left(P_{\A}(\ell) + P_{\B}(\ell) + P_{\C}(\ell)\right). \end{equation}
It remains to show that this product converges. From their definitions \eqref{eq:pdefs}, we find that
\begin{align*}
P_{\A}(\ell) &= 1-2/\ell + O_k(1/\ell^2), \\ P_{\B}(\ell) &= 1/\ell + O_k(1/\ell^2), \\ P_{\C}(\ell) &= 1/\ell + O_k(1/\ell^2).\end{align*}
It follows that each term in the product from equation~\eqref{eq:mukprod} is $1 + O(1/\ell^2)$; consequently, that product converges, which completes the proof of the proposition.
\end{proof}

\begin{remarks} For any given $k$, we can explicitly compute $P_\A$, $P_{\B}$, and $P_\C$ and thus write down an exact expression for $\mu_k$ as an infinite product over primes. For example, taking $k=2$, we find that
\[ \mu_2 = \prod_{\ell} \left(1 + \frac{\ell^5-\ell^3-2\ell^2-2\ell-1}{(\ell-1)^4 (\ell+1)^2 (\ell^2+\ell+1)}\right) \approx 1.261605. \]
\end{remarks}

Now that we know these moments $\mu_k$ exist, we proceed to establish an upper bound for them as a function of~$k$. The following result, well known in the theory of probability (see, for example,  \cite[Theorem 3.3.12, page 123]{Dur}), allows us to pass from such an upper bound to the existence of a limiting distribution function.

\begin{lemma}\label{probability lemma} Let $F_1, F_2, \dots$ be a sequence of distribution functions. Suppose that for each positive integer $k$, the limit $\lim_{n\to\infty} \int u^k \, dF_n(u) = \mu_k$ exists. If
	\[ \limsup_{k\to\infty} \frac{\mu_{2k}^{{1/2k}}}{2k} < \infty, \]
then there is a unique distribution function $F$ possessing the $\mu_k$ as its moments, and $F_n$ converges weakly to $F$.
\end{lemma}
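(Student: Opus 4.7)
The plan is to establish this as an instance of the classical \emph{method of moments} in probability theory, proceeding through tightness, subsequential extraction, transfer of moments, and a Carleman-type uniqueness argument.

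First, I would prove that $(F_n)$ is tight. For any $M > 0$ and any fixed $k$, Markov's inequality gives
\[
(1 - F_n(M)) + F_n(-M) \leq M^{-2k} \int u^{2k}\, dF_n(u).
\]
Since $\int u^{2k}\, dF_n \to \mu_{2k}$, these integrals are uniformly bounded in $n$, and the right-hand side can be made arbitrarily small by taking $M$ sufficiently large. By Helly's selection theorem, every subsequence of $(F_n)$ has a further subsequence converging weakly to some nondecreasing right-continuous limit $F$; tightness rules out mass escaping to infinity, so each such $F$ is an honest distribution function.

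Next I would transfer moments to any such subsequential limit $F$. Weak convergence does not automatically preserve moments, but the uniform bound on the $(2k)$-th moments of $F_n$ furnishes uniform integrability of $u^k$. Truncating at continuity points $\pm M$ of $F$,
\[
\left|\int u^k\, dF_n - \int_{-M}^{M} u^k\, dF_n\right| \leq M^{-k} \int u^{2k}\, dF_n \ll_k M^{-k},
\]
and the same estimate holds with $F$ in place of $F_n$. The truncated integrals converge as $n \to \infty$ by weak convergence on the compact interval $[-M,M]$ (since $u^k$ is continuous there), so letting $M \to \infty$ yields $\int u^k\, dF = \mu_k$ for every $k$.

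The crux of the argument, and what I expect to be the main obstacle, is showing that the moment sequence $\{\mu_k\}$ determines $F$ uniquely. The hypothesis $\mu_{2k}^{1/(2k)} \ll k$ is tailored for precisely this purpose: it implies $\mu_{2k}^{-1/(2k)} \gg 1/k$, so that $\sum_k \mu_{2k}^{-1/(2k)} = \infty$, which is Carleman's condition for the Hamburger moment problem. The standard route to uniqueness examines the characteristic function $\widehat{F}(t) = \int e^{itu}\, dF(u)$: the moment growth shows that $\widehat F$ admits a convergent Taylor expansion around every real point $t_0$ with radius of convergence bounded below independently of $t_0$, so $\widehat{F}$ extends analytically to a strip around $\mathbb{R}$ and is determined globally by the sequence $\mu_k = i^{-k}\widehat{F}^{(k)}(0)$; Fourier inversion then recovers $F$ from $\widehat{F}$. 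With uniqueness in hand, every subsequential weak limit of $(F_n)$ must equal this same $F$, and a routine compactness argument (if the full sequence failed to converge weakly to $F$, some subsequence would stay bounded away from $F$ in the L\'evy metric, contradicting the fact that all of \emph{its} subsequential limits equal $F$) forces $F_n \Rightarrow F$.
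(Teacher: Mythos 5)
Your proposal is a correct and complete proof. The paper itself gives no argument for this lemma, simply citing it as a standard result from probability (Durrett, Theorem 3.3.12), and your write-up is precisely the classical method-of-moments proof that appears there: tightness from Markov plus the uniformly bounded even moments, Helly extraction, transfer of moments to subsequential limits via uniform integrability, uniqueness from the moment-growth hypothesis (which you correctly observe gives Carleman's condition, though your actual uniqueness argument is the more elementary characteristic-function analyticity one that Durrett also uses), and the final compactness step to promote subsequential convergence to convergence of the full sequence.

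One small stylistic point: you invoke Carleman's condition as the mechanism behind uniqueness but then prove uniqueness a different way (uniform radius of convergence of the Taylor series of $\widehat F$, hence analytic continuation to a strip). Those are genuinely different arguments — Carleman's criterion is strictly more general but substantially harder to prove. Since the hypothesis here gives the stronger bound $\mu_{2k}\leq (Ck)^{2k}$, the analyticity route is the right choice and you do not actually need Carleman; you could drop that sentence to avoid giving the impression that the two are the same.
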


We will apply Lemma \ref{probability lemma} with
$$
F_n(u) := \frac{\#\{m\leq n\colon K^{\ast}(m) \leq u\}}{\#\{m \leq n\}},
$$
for which
$$
\lim_{n\to\infty} \int u^k \, dF_n(u) = \lim_{n\to\infty} \frac1{n} \sum_{m\le n} K^{\ast}(m)^k = \mu_k
$$
(so that the uses of $\mu_k$ in equation~\eqref{muk def} and Lemma~\ref{probability lemma} are consistent). In light of Lemma~\ref{probability lemma},  Theorem~\ref{distribution function theorem} is a consequence of the following upper bound.

\begin{prop}\label{prop:moments bounded}
The moments $\mu_k$  defined in equation~\eqref{muk def} satisfy $\log\mu_k \ll k\log\log k$. In particular, $(\mu_{2k}^{1/2k})/2k \ll (\log k)^A/k$ for some constant~$A$.
\end{prop}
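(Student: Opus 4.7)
The plan is to use the exact Euler product expression
\begin{equation*}
\mu_k = \prod_\ell \bigl( P_\A(\ell) + P_\B(\ell) + P_\C(\ell) \bigr)
\end{equation*}
established in equation~\eqref{eq:mukprod} (with $P_\A,P_\B,P_\C$ defined in~\eqref{eq:pdefs}) and to bound $\log\mu_k = \sum_\ell \log\bigl(P_\A(\ell)+P_\B(\ell)+P_\C(\ell)\bigr)$ by splitting the range of summation at $\ell\asymp k$.

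For large primes $\ell > 2k$, I would exploit the probabilistic interpretation indicated at the end of Section~\ref{odd section}: let $Y_\ell$ be the random variable whose value is the local factor of $K_z R_z$ at $\ell$, weighted by the corresponding local factor of $d_\sigma$. Then $P_\A(\ell)+P_\B(\ell)+P_\C(\ell) = E[Y_\ell^k]$, and the case $k=1$ of Lemma~\ref{lem:identity} says exactly that $E[Y_\ell]=1$. An inspection of the three cases shows that $Y_\ell \in [0,\ell/(\ell-1)]$, and in particular $|Y_\ell - 1| \leq 1/(\ell-1) \leq 2/\ell$ for $\ell\geq 3$. Expanding $Y_\ell^k = (1+(Y_\ell-1))^k$, the linear term vanishes, and the crucial observation is that the only configuration giving deviations of order $1/\ell$ (rather than $O(1/\ell^2)$) is the $\B$ case, which has total probability $1/\ell$. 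Hence $|E[(Y_\ell-1)^m]| \ll 1/\ell^{m+1}$ for $m\geq 2$, and standard estimation yields $\log E[Y_\ell^k] \ll k^2/\ell^3$ whenever $\ell \geq 2k$. Summing gives $\sum_{\ell > 2k} k^2/\ell^3 = O(1)$.

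For small primes $\ell \leq 2k$, I would turn to direct estimates on~\eqref{eq:pdefs}. After the manipulation $(1-2/\ell)(1-1/\ell)^{-2} = 1 - 1/(\ell-1)^2$, one sees $P_\A(\ell) = (1-2/\ell)(1 - 1/(\ell-1)^2)^k \leq 1$, while trivially $P_\C(\ell) \leq 1/\ell$, and
\begin{equation*}
P_\B(\ell) \leq \frac{1}{\ell-1}\biggl(\frac{\ell}{\ell-1}\biggr)^{k-1} \leq \frac{1}{\ell-1}\,\exp\!\bigl((k-1)/(\ell-1)\bigr).
\end{equation*}
Therefore $P_\A(\ell) + P_\B(\ell) + P_\C(\ell) \ll 1 + \exp((k-1)/(\ell-1))/(\ell-1)$, so taking logarithms yields $\log(P_\A(\ell)+P_\B(\ell)+P_\C(\ell)) \ll 1 + (k-1)/(\ell-1)$. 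Summing over primes $\ell \leq 2k$ and invoking Mertens' theorem, the contribution is
\begin{equation*}
\ll \pi(2k) + k \sum_{\ell\leq 2k}\frac{1}{\ell-1} \ll \frac{k}{\log k} + k\log\log k \ll k\log\log k.
\end{equation*}

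Combining the two ranges gives the first assertion $\log\mu_k \ll k\log\log k$. The second is a direct consequence:
\begin{equation*}
\frac{\mu_{2k}^{1/(2k)}}{2k} \leq \frac{\exp\!\bigl(C\log\log(2k)\bigr)}{2k} \ll \frac{(\log k)^C}{k},
\end{equation*}
giving the bound with $A = C$. The main technical obstacle will be the moment bound $|E[(Y_\ell-1)^m]| \ll 1/\ell^{m+1}$ in the large-prime regime: it is essential to use both that $Y_\ell-1$ has mean zero (so the $m=1$ term drops out) and that large deviations occur only with probability $1/\ell$, since otherwise one would only obtain $E[(Y_\ell-1)^m] = O(1/\ell^m)$, and the resulting $\sum_\ell k^2/\ell^2$ would diverge.
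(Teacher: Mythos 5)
Your argument is correct and reaches the stated bound, but it takes a genuinely different route from the paper's. The paper bypasses the configuration-level Euler product entirely: since $0\le K(N)\le 1$ pointwise, one has $K^*(N)\le N/\phi(N)$, so $\mu_k$ is bounded above by the $k$th moment $\mu_k'$ of $N/\phi(N)$, for which a classical Euler product formula of Schur is available. The paper then splits that product at $p=k$ and estimates each piece by elementary means to obtain $\log\mu_k'\ll k\log\log k$. You instead work directly with $\mu_k = \prod_\ell\bigl(P_\A(\ell)+P_\B(\ell)+P_\C(\ell)\bigr)$, recognizing each factor as $E[Y_\ell^k]$ for the random local factor $Y_\ell$ and exploiting both $E[Y_\ell]=1$ and the fact that the deviation of size $\asymp 1/\ell$ (the $\B$ case) occurs only with probability $1/\ell$. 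Your small-prime estimate is essentially the same Mertens-type computation the paper does for $\mu_k'$. Your approach is self-contained (it does not invoke Schur) and clarifies why the large-prime product converges, at the cost of the case analysis of the three local types; the paper's approach is shorter but delegates to a known result and discards the detailed structure of $\mu_k$.

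One small slip in your closing remark: if you only used $|E[(Y_\ell-1)^m]|\ll \ell^{-m}$ while still killing the $m=1$ term, the resulting tail is $\sum_{\ell>2k}k^2/\ell^2$, which converges (it is $O(k/\log k)$, not divergent); what would actually fail is dropping the cancellation at $m=1$ entirely, which gives $\sum_{\ell>2k}k/\ell$, a divergent series. This does not affect the correctness of your proof as written, since you do use both ingredients; it only affects the ``essentiality'' commentary.
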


\begin{proof} Recall that $R(N)$ denotes the function $N/\phi(N)$. The number $\mu_k$ is the $k$th moment of the function $K(N) R(N)$, and that function is bounded pointwise by $R(N)$. So $\mu_k$ is bounded above by $\mu_k'$, where
\[
\mu_k' := \lim_{x\to\infty} \frac1x \sum_{N\le x} R(N)^k.
\]
Thus, it suffices to establish the estimate $\log \mu_k' \ll k\log\log k$.

By a result known already to Schur (see \cite[page 194]{Sch}; see also \cite[Exercise 14, page 42]{MV}), we have that for each $k$,
\[
\mu_k' = \prod_{p} \bigg(1-\frac{1}{p} + \frac{1}{p}\bigg(1-\frac1p\bigg)^{\!-k}\,\bigg) = \prod_p \bigg( 1 + \frac{1}{p}\bigg( \bigg( \frac p{p-1} \bigg)^{k} - 1^{k} \bigg) \bigg).
\]
By the mean value theorem,
\begin{align*}
1 + \frac{1}{p}\bigg( \bigg( \frac p{p-1} \bigg)^{k} - 1^{k}\bigg) &= 1 + O\bigg( \frac k{p(p-1)} \bigg( \frac p{p-1} \bigg)^{k-1} \bigg) \\
&= 1 + O\bigg( \frac k{p^2} \bigg( 1+\frac1{p-1} \bigg)^{k-1} \bigg) \\
&< 1 + O\bigg( \frac k{p^2} \exp\bigg( \frac{k-1}{p-1} \bigg) \bigg),
\end{align*}
and so
\begin{equation} \label{final mu estimate}
\mu_k' < \prod_{p\le k} \bigg( 1 + O\bigg( \frac k{p^2} \exp\bigg( \frac{k-1}{p-1} \bigg) \bigg) \bigg) \prod_{p> k} \bigg( 1 + O\bigg( \frac k{p^2} \exp\bigg( \frac{k-1}{p-1} \bigg) \bigg) \bigg).
\end{equation}
In the first product, we use the crude inequality
\[
1 + O\bigg( \frac k{p^2} \exp\bigg( \frac{k-1}{p-1} \bigg) \bigg) < 1 + O\bigg( k \exp\bigg( \frac k{p-1} \bigg) \bigg) \ll k \exp\bigg( \frac k{p-1} \bigg),
\]
so that for some absolute constant $C$,
\begin{align*} \notag
\prod_{p\le k} \bigg( 1 + O\bigg( \frac k{p^2} \exp\bigg( \frac{k-1}{p-1} \bigg) \bigg) \bigg) &\le \prod_{p\le k} Ck \exp\bigg( \frac k{p-1} \bigg) \\
&\leq (Ck)^{\pi(k)}\exp\bigg(k \sum_{p\le k} \frac{1}{p-1}\bigg) \\
&= \exp(O(k)) \exp(O(k\log\log{k})).
\end{align*}
In the second product, the exponential factor is uniformly bounded, and so
\begin{align*}
\prod_{p> k} \bigg( 1 + O\bigg( \frac k{p^2} \exp\bigg( \frac{k-1}{p-1} \bigg) \bigg) \bigg) &= \prod_{p>k} \bigg( 1 + O\bigg( \frac k{p^2} \bigg) \bigg) \\
&< \prod_{p> k} \bigg( \exp \bigg( O\bigg( \frac k{p^2} \bigg) \bigg) \bigg) \\
&\le \exp \bigg( O \bigg( \sum_p \frac k{p^2} \bigg) \bigg) = \exp(O(k)).
\end{align*}
In light of these last two estimates, equation~\eqref{final mu estimate} yields $\mu_k' \le \exp(O(k\log\log{k}))$ as required.
\end{proof}

\begin{remarks} It is worthwhile to make a few remarks about the behavior of $D(u)$.
Let $u_0 := \frac{2}{3}C_2$. We can view equation~\eqref{eq:convenienteulerfactorization}, with $z=\infty$, as providing us with a conveniently factored Euler product expansion of $K^{\ast}(N)$. Comparing the terms of this expansion with those in the product expansion for $C_2$, one sees that $K^{\ast}(N) > u_0$ for all $N$. In fact, one finds that $K^{\ast}(N)$ is bounded away from $u_0$ unless all of the small odd primes belong to $\A$, i.e., unless $N(N-1)$ possesses no small odd prime factors. Conversely, if $N(N-1)$ has no small odd prime factors, an averaging argument shows that $K^{\ast}(N)$ is usually close to $u_0$. In this way, one proves that $D(u_0)=0$ while $D(u) > 0$ for $u > u_0$.

Since $K(N)$ is absolutely bounded and bounded away from zero, several results on $D(u)$ follow immediately from corresponding results for the distribution function of $N/\phi(N)$, whose behavior has been studied by Erd\H{o}s \cite{Erd46} and Weingartner \cite{Wei07, Wei12}. In particular, from \cite[Theorem 1]{Erd46}, we see that $D(u) > 1-\exp(-\exp(Cu))$ for a certain constant $C>0$ and all large $u$.
	
Finally, we remark that there is an alternative, more arithmetic approach to the proof of Theorem \ref{distribution function theorem}, based on ideas and results of Erd\H{o}s \cite{Erd35D} and Shapiro \cite{Sha}. This approach allows us to show that the distribution function $D(u)$ of Theorem \ref{distribution function theorem} is continuous everywhere and strictly increasing for $u > u_0$. We omit the somewhat lengthy arguments for these claims.
\end{remarks}

\section*{Acknowledgements}
We thank Igor Shparlinski for bringing the reference \cite{BS} to our attention. We also thank the anonymous referee for a careful reading of the manuscript.

\end{document}